
%
%
%
\documentclass{pspum-l}
\usepackage{graphicx}
\usepackage{amsmath,amsthm,amscd,amssymb}

\newtheorem{theorem}{Theorem}[section]
\newtheorem{lemma}[theorem]{Lemma}
\newtheorem{proposition}[theorem]{Proposition}
\newtheorem{corollary}[theorem]{Corollary}

\theoremstyle{definition}
\newtheorem{definition}[theorem]{Definition}

\theoremstyle{remark}
\newtheorem{remark}[theorem]{Remark}

\numberwithin{equation}{section}



\newcommand{\id}{I}

\newcommand{\e}{\hbox{\rm e}}
\newcommand{\bp}{{\mathbf{p}}}
\newcommand{\bw}{{\mathbf{w}}}

\newcommand{\bbR}{{\mathbb{R}}}
\newcommand{\R}{{\mathbb{R}}}

\newcommand{\bbZ}{{\mathbb{Z}}}
\newcommand{\Z}{{\mathbb{Z}}}
\newcommand{\C}{{\mathbb{C}}}

\newcommand{\bbC}{{\mathbb{C}}}

\newcommand{\cB}{{\mathcal B}}

\newcommand{\cD}{{\mathcal D}}

\newcommand{\cH}{{\mathcal H}}

\newcommand{\cM}{{\mathcal M}}

\newcommand{\cP}{{\mathcal P}}

\newcommand{\cT}{{\mathcal T}}

\newcommand{\cV}{{\mathcal V}}

\newcommand{\cX}{{\mathcal X}}
\newcommand{\cY}{{\mathcal Y}}

\newcommand{\lb}{\label}

\newcommand{\bv}{{\mathbf v}}

\newcommand{\by}{{\mathbf y}}

\newcommand{\ran}{\text{\rm{ran}}}

\newcommand{\bi}{\bibitem}
\newcommand{\sgn}{\text{\rm{sign }}}

\newcommand{\train}{\operatorname{Train}}
\newcommand{\dist}{\operatorname{dist}}

\newcommand{\mi}{\operatorname{Mas}}
\newcommand{\mo}{\operatorname{Mor}}


\numberwithin{equation}{section}

\renewcommand{\det}{\operatorname{det}}
\newcommand{\loc}{\operatorname{loc}}
\newcommand{\dom}{\operatorname{dom}}
\newcommand{\codim}{\operatorname{codim}}

\newcommand{\tr}{\operatorname{tr}}

\newcommand{\Sp}{\operatorname{Spec}}

\renewcommand{\Re}{\operatorname{Re }}
\renewcommand\Im{\operatorname{Im}}
\renewcommand{\ker}{\operatorname{ker}}

\begin{document}
\allowdisplaybreaks

\title{The Morse and Maslov indices for matrix Hill's equations}

\author{Christopher K.\ R.\ T. Jones}
\address{Mathematics Department,
The University of North Carolina at Chapel Hill Chapel Hill, NC 27599, USA}
\email{ckrtj@email.unc.edu}
\author{Yuri Latushkin}
\address{Department of Mathematics,
The University of Missouri, Columbia, MO 65211, USA}
\email{latushkiny@missouri.edu}
\author{Robert Marangell}
\address{Department of Mathematics and Statistics, The University of Sydney
Sydney, NSW 2006, Australia}
\email{r.marangell@maths.usyd.edu.au}
\thanks{Partially supported by the grants NSF DMS-0754705, DMS-1067929, DMS-0410267 and ONR N00014-05-1-0791, and by the Research Council and the Research
Board of the University of Missouri. {\bf This copy contains corrections in formulas (1.5), (3.6) of the printed version}}

\subjclass{Primary 53D12, 34L40; Secondary 37J25, 70H12}
\date{\today}
\keywords{Schr\"odinger equation, Hamiltonian systems, periodic potentials, eigenvalues, stability, differential operators, discrete spectrum}
\dedicatory{To Fritz Gesztesy on the occasion of his 60-th birthday with best wishes}
\begin{abstract}
 For Hill's equations with matrix valued periodic potential, we discuss relations between the Morse index, counting the number of
 unstable eigenvalues, and the Maslov index, counting the number of signed intersections of a path in the space of Lagrangian planes with a fixed plane. We adapt to the one dimensional periodic setting the strategy of a recent paper by J. Deng and C. Jones relating the Morse and Maslov indices for multidimensional elliptic eigenvalue problems. 
\end{abstract}

\maketitle

 \section{Introduction} \lb{s1}

Various results on Hill's equation are among many fundamental contributions made by Fritz Gesztesy in mathematical physics and analysis, see, for example, 
 \cite{GW96,GT09}.
 In the current  paper, we discuss a symplectic approach to counting positive $\theta$-eigenvalues for Hill's equations with matrix valued periodic potentials, that is, the values of $\lambda$ for which there exists a nontrivial solution of the eigenvalue problem
\begin{equation}\label{eq:hill}\begin{split}
Hy &:= y'' + V(x)y = \lambda y, \quad 
 y = (y_1(x), \ldots , y_n(x))^\top,
\end{split}
\end{equation}
that satisfies the boundary conditions
\begin{equation}\label{thetaBC}
y(L) = e^{i \theta} y(-L), \quad y'(L) = e^{i \theta} y'(-L).
\end{equation}
Here,   $x\in\R$, $y_i: \R \to \C$, $\theta\in[0,2\pi]$, and  $V(x)$ is an $n \times n$ symmetric matrix whose entires are real valued piecewise continuous periodic functions of period $2L$. We will denote by $H_\theta$ the differential operator in $L^2([-L,L])$ associated with the eigenvalue problem \eqref{eq:hill}, \eqref{thetaBC}.

A great deal of attention is devoted to Schr\"odinger operators with periodic potentials, see, e.g., \cite{MW,RS78,K97} and the bibliography therein. In the current paper, our main concern is the {\em Morse index}, $\mo(H_\theta)$, a ubiquitous number that appears in many areas, from variational calculus \cite{B56,D76,M63} to stability of traveling waves \cite{J88,SS08}, and which is defined as the dimension of the spectral subspace of a self-ajoint operator  corresponding to its positive (unstable) discrete eigenvalues. We will relate it to the {\em Maslov index}, $\mi(\Gamma,\cX)$, which is defined as the signed number of intersections of a curve $\Gamma$ in the set of Lagrangian planes with a given subvariety, called the {\em train} of a fixed Lagrangian plane $\cX$ (see \cite{arnold67,Arn85, J88, rs93, RoSa95} as well as more recent papers \cite{F, FJN, O90} and the bibliography therein for a discussion of this beautiful subject). 

One of the main motivations for studying the Maslov index in the context of second order differential operators was a generalization in \cite{Arn85} (for the case of matrix valued potentials) of the classical Sturm oscillation theorems; in connection with the latter we mention \cite{GST96}, \cite[Chapter 3]{G07} and the bibliography therein.
That the Morse and Maslov indices for periodic problems are related is of course well known (see, e.g., the classical sources 
\cite{D76,CZ84}, an excellent book \cite{A01} which has a detailed bibliography, and the important recent work done in \cite{CDB06, CDB09, CDB11}). However, all literature that we know deals only with the case of periodic eigenvalues corresponding to the particular case of $\theta=0$ or $\theta=2\pi$ (but also see \cite{SB}). 

More importantly, in the present paper we use a novel approach of determining the Maslov index borrowed from a recent paper \cite{DJ11} where the relations between the Morse and Maslov indices have been established in the {\em multidimensional} situation, in particular, for elliptic problems in a star-shaped domain $\cD$ in $\R^d$ containing zero. The main idea in \cite{DJ11} is to consider a family of ``shrinking'' domains $\cD_s$ parametrized by $s\in(0,1]$ and such that a point
$\mathbf{x}\in\partial\cD$ if and only if $s\mathbf{x}\in\partial\cD_s$.
Rescaling the original elliptic equation for $\lambda$-eigenfunctions from $\cD_s$ to $\cD$, one then defines a trace map 
$\phi^\lambda_s$ acting from the Sobolev space $H^1(\cD)$ into the trace space $H^{1/2}(\partial\cD)\times H^{-1/2}(\partial\cD)$. It maps a weak solution of the eigenvalue equation with no boundary conditions at all into a vector function on the boundary whose components are the Dirichlet and Neumann traces of the solution. Using Green's formulas, one defines a symplectic structure in the trace space so that if $\cY_{s,\lambda}$
denotes the set of the weak solutions then $\phi_s^\lambda(\cY_{s,\lambda})$ forms a curve in the set of Fredholm Lagrangian planes. The boundary conditions define a plane, and an intersection of the curve with the train of the plane defined via the boundary conditions corresponds to an eigenvalue of the elliptic operator at hand, eventually leading to a formula relating the Morse and Maslov indices. 

In the current paper, for the boundary value problem \eqref{eq:hill}, \eqref{thetaBC} on $[-L,L]$, following the strategy in \cite{DJ11}, we consider a family, parametrized by $s\in(0,L]$, of boundary value problems for \eqref{eq:hill} on the segments $[-s,s]$ with the boundary conditions  
\begin{equation}\label{sthetaBC}
y(s) = e^{i \theta} y(-s),\quad  y'(s) = e^{i \theta} y'(-s).
\end{equation} 
By changing  $s$ and $\lambda$ and using the traces of solutions of the differential equation \eqref{eq:hill} at the boundary of the segment $[-s,s]$, we construct a path in the set of finite dimensional Lagrangian planes. The construction of the path is the first crucial  ingredient of the current paper. The second key point is to utilize and further develop an idea from \cite{Ga93} to augment the first order system corresponding to \eqref{eq:hill} by considering a supplementary linear complex $(2n\times 2n)$ first order ODE system with the coefficient $\frac{i\theta}{2s}\,I_{2n}$ whose solutions automatically satisfy the boundary conditions \eqref{sthetaBC}. This allows one to replace the $e^{i\theta}$-periodic boundary conditions in \eqref{thetaBC}, \eqref{sthetaBC} by certain ``Dirichlet-type'' boundary conditions for the augmented system.

Our plan then is to re-write the eigenvalue equation \eqref{eq:hill} as a complex $(2n\times 2n)$ first order system, separate the real and imaginary parts of the solutions in the eigenvalue equation and the boundary conditions, thus arriving at a $(4n\times 4n)$ real system, consider the augmented $(8n\times 8n)$ real system, and then to define a trace map, $\Phi_s^\lambda$, for each $s\in(0,L]$ and $\lambda\in\R$, that maps a solution $(\bp,\bw)^\top$ of the augmented system on $[-L,L]$ with no boundary conditions at all into its trace $(\bp(-s),\bw(-s),\bp(s),\bw(s))^\top\in\R^{16n}$ on the boundary of of the segment $[-s,s]$.  This leads to the critical observation (see Proposition \ref{cor:mult} below) that if $Y_{s,\lambda}$ denotes the set of the solutions of the augmented  system then $\lambda$ is an eigenvalue of \eqref{eq:hill}, \eqref{sthetaBC} on $[-s,s]$ if and only if the plane $\Phi_s^\lambda(Y_{s,\lambda})$ intersects the plane $X\times X$ in $\R^{16}$ consisting of vectors whose 
respective $\bp(\pm s)$- and $\bw(\pm s)$-components are equal; here and below we denote $\bp=(p,q)^\top$, $\bw=(w,z)^\top$, and use notation
\begin{equation}\label{DefX1}
X = \{ (p,q,w,z)^\top\in \R^{8n}\,\big|\, p=w, q = z \}.
\end{equation} 
Thus, the ``Dirichlet-type'' boundary condition $\Phi_s^\lambda\big((\bp,\bw)^\top\big)\in X\times X$ replaces the $e^{i\theta}$-periodic boundary condition \eqref{sthetaBC}.

There is a natural symplectic structure in $\R^{16n}$ such that the planes $\Phi_s^\lambda(Y_{s,\lambda})$ and $X\times X$ in $\R^{16n}$ are Lagrangian (see Theorem \ref{th:lagrange}). Thus, one can consider crossings with the train of $X\times X$ of the Lagrangian curve $\Gamma=\Gamma_1\cup\Gamma_2\cup\Gamma_3\cup\Gamma_4$ formed by  $\Phi_s^\lambda(Y_{s,\lambda})$ when $(\lambda,s)$ runs along the boundary of the square $[s_0,L]\times[0,\lambda_\infty]$, for a small $s_0>0$ and a large $\lambda_\infty$, where $\Gamma_j$, $j=1,2,3,4$, correspond to the four sides of the square, see Figure \ref{F1}.
We stress that $\Gamma$ depends on the choice of $s_0$ and $\lambda_\infty$ while the location of the crossings of course depends on $\theta$; we sometimes write $\Gamma_{(\theta,s_0)}$ and $\Gamma_{j, (\theta,s_0)}$.

 \begin{figure}
\begin{picture}(100,100)(-20,0)
\put(2,2){0}
\put(10,5){\vector(0,1){95}}
\put(5,10){\vector(1,0){95}}
\put(40,20){\vector(1,0){5}}
\put(30,80){\vector(-1,0){5}}
\put(80,50){\vector(0,1){5}}
\put(10,60){\vector(0,-1){5}}
\put(71,40){\text{\tiny $\Gamma_2$}}
\put(12,40){\text{\tiny $\Gamma_4$}}
\put(45,73){\text{\tiny $\Gamma_3$}}
\put(45,13){\text{\tiny $\Gamma_1$}}
\put(100,12){$\lambda$}
\put(12,100){$s$}
\put(80,20){\line(0,1){60}}
\put(80,8){\line(0,1){4}}
\put(78,0){$\lambda_\infty$}
\put(10,20){\line(1,0){70}}
\put(10,80){\line(1,0){70}}
\put(0,78){$L$}
\put(0,18){$s_0$}
\put(10,30){\circle*{4}}
\put(10,50){\circle*{4}}
\put(10,80){\circle*{4}}
\put(40,80){\circle*{4}}
\put(60,80){\circle*{4}}
\put(20,87){{\tiny \text{$\theta$-eigenvalues}}}
\put(11,25){{\tiny \text{no $(\theta,s_0)$-eigenvalues}}}
\put(-10,20){\rotatebox{90}{{\tiny conjugate points}}}
\put(90,20){\rotatebox{90}{{\tiny no conjugate points}}}
\end{picture}
\caption{ $\lambda=0$ is a $\theta$-eigenvalue, $\theta\in(0,2\pi)$, and $s_0$ is small enough}\label{F1}
\end{figure}
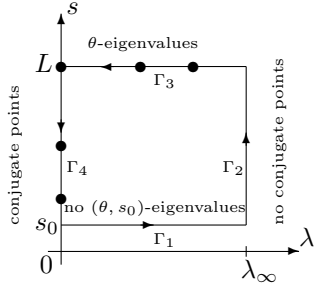 

A homotopy argument implies that the Maslov index $\mi(\Gamma, X\times X)$ of the entire curve $\Gamma$ is equal to zero (see Corollary \ref{cor3.9}). By general properties of the Maslov index one infers $\mi(\Gamma, X\times X)=\sum_{j=1}^4\mi(\Gamma_j, X\times X)$. For $\theta\in(0,2\pi)$ one can show that there are no crossings along $\Gamma_1$ (when $s=s_0$ and $\lambda\in[0,\lambda_\infty]$) provided $s_0$ is chosen small enough (see Lemma \ref{lem:llss}).
For $\theta=0$ or $\theta=2\pi$, assuming that the potential $V$ is continuous at the point $x=0$, and $s_0>0$ is small enough, one can show that the number of crossings along $\Gamma_1$ is equal to the number $\mo(V(0))$ of positive eigenvalues of the matrix $V(0)$ (Lemma \ref{llssnew}).
Since the spectrum of the operator $H_\theta$ is bounded from above, there are no crossings along $\Gamma_2$  (when $\lambda=\lambda_\infty$ and $s\in[s_0,L]$) provided $\lambda_\infty$ is chosen large enough, see Lemma \ref{lem:llss}.  

The crossings of the  curve $\Gamma_3$ (when $s=L$ and $\lambda\in[0,\lambda_\infty]$), correspond to the $\theta$-eigenvalues of  \eqref{eq:hill}, \eqref{thetaBC}. A local computation shows that all crossings along $\Gamma_1$ have the same signs and all crossings along $\Gamma_3$ have the same signs, see Lemma \ref{lem:cross}. This important monotonicity property of the Maslov index implies that the Morse index $\mo(H_\theta)$ is equal to the number of crossings along $\Gamma_3$ counting their multiplicities. 

It turns out that the crossings along $\Gamma_4$ (when $\lambda=0$ and $s\in[s_0,L]$) correspond to conjugate points of the Hill's equation on $[-L,L]$, that is, to the points $s$ where the number $e^{i\theta}$ is an eigenvalue of the propagator of this equation transforming the value of its solution at the point $-s$ into the value at the point $s$ (Proposition \ref{cor:mult}). Thus, $\mi(\Gamma_4, X\times X)$ can be viewed as the Maslov index of  the boundary value problem \eqref{eq:hill}, \eqref{thetaBC} for the Hill equation.
 Yet another local computation shows that all crossings along $\Gamma_4$ have the same sign provided that, in addition, the potential is sign definite (see Lemma \ref{lem:scross}).

 Since $\mi(\Gamma, X\times X)=0$, we therefore arrive at the 
 desired formula
 \begin{equation}\label{MoMiform}
 2\mo_\bbC(H_\theta)=
 \begin{cases}-\mi_\bbR(\Gamma_4, X\times X), & \text{if  $\theta\in(0,2\pi)$, }\\ & \text{for small $s_0=s_0(\theta)>0$,}\\
 -\mi_\bbR(\Gamma_4, X\times X)+\mo_\bbR(V(0)), & \text{if $\theta=0$ or $\theta=2\pi$,}\\ &\text{for small $s_0>0$},
 \end{cases}
 \end{equation}
 relating the Maslov index of the boundary value problem for the Hill equation and the Morse index of the corresponding differential operator (see Theorem \ref{th:mas2} summarizing our results).
 For instance, for a fixed $s_0>0$, when $\theta$ changes from a positive value to zero, the crossings move from $\Gamma_4$ to $\Gamma_1$ through the left bottom corner of the square in Figure \ref{F1}, thus keeping the proper balance in formula \eqref{MoMiform}.

The paper is organized as follows. In Section \ref{heas} we set up the stage and introduce the augmented system for the Hill equation \eqref{eq:hill}. After a brief reminder of basics on the Maslov index, in Section \ref{secsa} we introduce an appropriate Lagrangian structure, and relate the crossings of the path $\Phi_s^\lambda$ to the eigenvalues of differential operators. In Section \ref{mmi} we prove monotonicity of the crossings, and summarize the main results of the paper.
Finally, in Section \ref{sec:me} we conduct several numerical experiments calculating the Maslov and Morse indices for a particular Mathieu equation.

 {\bf Notations.} We denote by $I_n$ and $0_n$ the $n\times n$ identity and zero matrix.  For an $n\times m$ matrix $A=(a_{ij})_{i=1,j=1}^{n,m}$
 and a $k\times\ell$ matrix $B=(b_{ij})_{i=1,j=1}^{k,\ell}$, we denote by
 $A\otimes B$ the Kronecker product, that is, the $nk\times m\ell$ matrix composed of $k\times\ell$ blocks $a_{ij}B$, $i=1,\dots n$, $j=1,\dots m$. We let $\langle\cdot\,,\cdot\rangle_{\R^n}$ denote the real scalar product in the space $\bbR^n$ of $n\times 1$ vectors, and let $\top$ denote transposition. We denote by $A \oplus B$ the matrix $\begin{pmatrix} A & 0 \\ 0 & B \end{pmatrix}$ and use notation $J = \begin{pmatrix} 0 & 1 \\ -1 & 0 \end{pmatrix}$ for the standard symplectic matrix. When $a=(a_i)_{i=1}^n\in\bbR^n$ and $b=(b_j)_{j=1}^m\in\bbR^m$ are $(n\times 1)$ and $(m\times 1)$ column vectors,  we use notation $(a,b)^\top$ for the $(n+m)\times 1$ column vector with the entries $a_1,\dots,a_n,b_1,\dots,b_m$ (just avoiding  the use of $(a^\top,b^\top)^\top$). We denote by $\cB(\cX)$ the set of linear bounded operators on a Hilbert space $\cX$ and by $\Sp(T)=\Sp(T; \cX)$ the spectrum of an operator on $\cX$.
 
 {\bf Acknowledgment.} We thank Konstantin Makarov and Holger Dullin for their valuable suggestions.
 
 \section{Hill's equation and an augmented equation}\label{heas}
 
We start with the eigenvalue problem \eqref{eq:hill}, where 
 we consider $\lambda \in \R$, and consider complex valued solutions to (\ref{eq:hill}). Setting 
\begin{equation}\label{Defpq}\begin{split}
p_i &:= (\Re(y_i), \Im(y_i))^\top \in \R^{2}, \quad p:= (p_1, \ldots, p_n)^\top \in \R^{2n},\\
q_i & := (\Re(y_i'), \Im(y_i'))^\top \in \R^{2}, \quad q: = (q_1, \ldots, q_n)^\top \in \R^{2n},\end{split}\end{equation}
we can write (\ref{eq:hill}) as follows:
\begin{equation}\label{eq:hillsys}
\begin{pmatrix} p \\ q \end{pmatrix}' = \begin{pmatrix} 0_{2n} & I_{2n} \\ \lambda I_{2n} -  V(x) \otimes I_2 & 0_{2n} \end{pmatrix} \begin{pmatrix} p \\ q \end{pmatrix}.
\end{equation}
It is sometimes convenient to denote ${\bf p} := (p,q)^\top \in \R^{4n}$ and to write \eqref{eq:hillsys} as
\begin{equation} \label{eq:hillp}
{\bf p}' = A(x,\lambda) {\bf p}, \quad A(x,\lambda)= \begin{pmatrix} 0_{2n} & I_{2n} \\ \lambda I_{2n} -  V(x) \otimes I_2 & 0_{2n} \end{pmatrix}.
\end{equation} 

We are interested in studying bounded on $\bbR$ solutions of \eqref{eq:hill}. To this end, for each $\theta \in [0,2\pi]$, we will examine for which $\lambda$ there exists a nontrivial solution $y$ of \eqref{eq:hill} that satisfies the boundary condition \eqref{thetaBC}.
In particular, if $\theta=0$ or $\theta=2\pi$ we have periodic boundary conditions, and if $\theta=\pi$ we have antiperiodic ones. Equivalently, using \eqref{Defpq} and  writing out \eqref{thetaBC} in real and imaginary parts, we seek a nontrivial solution
$\bp=(p, q)^\top$ of  \eqref{eq:hillsys} such that the following  boundary condition is satisfied:
\begin{equation}\label{eq:pereigen}
\begin{pmatrix} p(L) \\ q(L) \end{pmatrix} 
= \begin{pmatrix} I_n \otimes U(\theta)  & 0 \\ 0 & I_n \otimes U(\theta) \end{pmatrix}\begin{pmatrix} p(-L) \\ q(-L) \end{pmatrix}, 
\end{equation}
where we denote 
\begin{equation}\lb{defUoftheta}
U(\theta):= \begin{pmatrix} \cos \theta & -\sin \theta \\ \sin \theta & \cos \theta \end{pmatrix}.
\end{equation}
In the notation of equation (\ref{eq:hillp}), condition (\ref{eq:pereigen}) is written as
\begin{equation}\label{eq:eigenshort}
{\bf p} (L) = (I_{2n} \otimes U(\theta)) {\bf p} (-L).
\end{equation}
Since the boundary conditions \eqref{thetaBC} and \eqref{sthetaBC} are the same in the case when $\theta=0$ or $\theta=2\pi$, out of these two possibilities we will always consider only the former.

We now briefly discuss the spectrum of the operators associated with \eqref{eq:hill}. On the space $L^2(\R)$ of $(n\times 1)$ complex vector valued functions, or on the space  $BUC(\R)$ of bounded uniformly continuous complex vector valued functions, one can associate to equation \eqref{eq:hill} a differential operator, $H$, defined by $H=\frac{d^2}{dx^2}+V(x)$, whose domain is given by the following formula (we recall that the potential $V$ is bounded):
\begin{equation}\label{domH}
\dom(H)=\Big\{ y\in L^2(\R)\Big|\, y,y'\in AC_{\rm loc}(\R), y''\in L^2(\R)\Big\}
\end{equation}
(for the space $BUC(\R)$ one has to replace the space $L^2(\R)$ in \eqref{domH} by $BUC(\R)$). There is a standard way, see \cite[Section XIII.16]{RS78}, of associating with $H$ a family of operators, $H_\theta$, with $\theta\in[0,2\pi]$, acting in $L^2([-L,L])$ and induced by the complex  boundary conditions \eqref{thetaBC}. Indeed, we may identify $L^2(\bbR)$ and $L^2([0,2\pi]; L^2([-L,L]))=L^2([0,2\pi]\times[-L,L])$ by introducing,
see \cite[eqn. (147)]{RS78}, a family of operators $W_\theta: L^2(\R)\to L^2([-L,L])$ by
\begin{equation}\lb{defUtheta}
(W_\theta y)(x)=\sum_{n\in\Z}\e^{-in\theta }y(x+2Ln),\quad x\in[-L,L].
\end{equation}
Obviously, $(W_\theta y)(L)=\e^{i\theta}(W_\theta y)(-L)$, and analogously  for the derivative $y'$, leading to the fact the $H$ is similar to the direct integral,
$\oplus\int_0^{2\pi} H_\theta\frac{d\theta}{2\pi}$, of the operators $H_\theta$ defined in $L^2([-L,L])$ as follows: $H_\theta=\frac{d^2}{dx^2}+V(x)$ with
\begin{equation}\lb{defLtheta}\begin{split}
 \dom(H_\theta)=\Big\{&
y\in L^2([-L,L])\Big|
 y,y'\in AC_{\rm loc}([-L,L]),\\ & y''\in L^2([-L,L])\text{ and the boundary condition \eqref{thetaBC} holds}\Big\}.
\end{split}\end{equation}
Similarly, one can introduce the operator $H_\theta$ on the space $C([-L,L])$ of continuous functions by replacing $L^2([-L,L])$ in \eqref{defLtheta} by $C([-L,L])$. 

For each $\theta\in[0,2\pi]$, the spectrum $\Sp(H_\theta)$ consists of discrete eigenvalues; when $\theta$ varies, they fill up spectral bands with or without spectral gaps between them, thus forming the spectrum $\Sp(H)$, see \cite{MW,RS78} for a detailed exposition. 
\begin{definition}
We say that $\lambda$ is a {\em $\theta$-eigenvalue} of equation (\ref{eq:hill}) if there is a nonzero solution of (\ref{eq:hillsys}) such that the boundary condition (\ref{eq:pereigen}) is satisfied. 
\end{definition}
For each $\lambda\in\R$, we let $\Psi_A(x,\lambda)$ denote the fundamental matrix solution to equation (\ref{eq:hillp}) such that $\Psi_A(-L,\lambda) = \id_{4n}$  and, for each $s\in(0,L]$, let  $M_A(s,\lambda):=\Psi_A(s,\lambda)\Psi_A(-s,\lambda)^{-1}$ denote its propagator so that $\bp(s)=M(s,\lambda)\bp(-s)$ for a solution of \eqref{eq:hillp}. 
In particular, $M_A(L,\lambda)=\Psi_A(L,\lambda)$ denotes the monodromy matrix for \eqref{eq:hillp}. We recall that in \cite{Ga93}, $\lambda$ is said to be a {\em $\gamma$-eigenvalue} if $\gamma\in\{\gamma\in\C: \, |\gamma|=1\}$ is an eigenvalue of the monodromy matrix of equation (\ref{eq:hillsys}). We note that our definition of $\theta$-eigenvalue is consistent with the definition of $\gamma$-eigenvalue, with $\gamma = e^{i \theta}$, given in \cite{Ga93}, as the following proposition shows. 

\begin{proposition} \label{prop:garequiv}
On $L^2(\R)$ or $BUC(\bbR)$, the following assertions are equivalent:
\begin{enumerate}
\item[(i)] \, $\lambda\in \Sp(H)$; 
\item[(ii)]\, equation \eqref{eq:hillsys} has a bounded solution on $\R$;
\item[(iii)]\, $\Sp(M_A(L,\lambda))\cap\{\gamma\in\C: |\gamma|=1\}\neq\emptyset$;
\item[(iv)] equation \eqref{eq:hillsys} has a solution on $[-L,L]$
satisfying \eqref{eq:pereigen} for a $\theta\in[0,2\pi]$;
\item[(v)] \,  $\lambda\in \Sp(H_\theta)$ for a $\theta\in[0,2\pi]$.
\end{enumerate}
\end{proposition}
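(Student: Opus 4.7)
My plan is to establish the chain of equivalences (i) $\Leftrightarrow$ (v) $\Leftrightarrow$ (iv) $\Leftrightarrow$ (iii) $\Leftrightarrow$ (ii). The two substantive links are (i) $\Leftrightarrow$ (v), which invokes the direct-integral decomposition of $H$ via the Bloch-Floquet transform in \eqref{defUtheta}, and (ii) $\Leftrightarrow$ (iii), which is the classical Floquet dichotomy for periodic linear systems. The links (iii) $\Leftrightarrow$ (iv) and (iv) $\Leftrightarrow$ (v) reduce to bookkeeping between the complex form of Hill's equation and its real $(4n)\times(4n)$ realification.

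For (iii) $\Leftrightarrow$ (iv), I would exploit that the real system \eqref{eq:hillsys} is the realification of the complex $(2n)\times(2n)$ first-order system for $(y,y')^\top$; let $\tilde M_A(L,\lambda)$ denote the complex monodromy of the latter. A nonzero $\bp(-L)$ satisfies \eqref{eq:pereigen}, i.e.\ $M_A(L,\lambda)\bp(-L)=(I_{2n}\otimes U(\theta))\bp(-L)$, if and only if the corresponding $\xi=(y(-L),y'(-L))^\top\in\C^{2n}$ satisfies $\tilde M_A(L,\lambda)\xi=\e^{i\theta}\xi$. Since the complex spectrum of the real matrix $M_A$ coincides, as a set, with $\Sp(\tilde M_A)\cup\overline{\Sp(\tilde M_A)}$, the existence of some $\theta\in[0,2\pi]$ for which such a $\xi$ exists is equivalent to $\Sp(M_A(L,\lambda))\cap\{|\gamma|=1\}\neq\emptyset$. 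Then (iv) $\Leftrightarrow$ (v) is immediate from \eqref{defLtheta}: $H_\theta$ is self-adjoint on a compact interval with regular boundary conditions, so $\Sp(H_\theta)$ is purely discrete, and $\lambda\in\Sp(H_\theta)$ iff a nontrivial $\lambda$-eigenfunction exists in $\dom(H_\theta)$, which, rewritten via \eqref{Defpq}, is exactly (iv).

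For (ii) $\Leftrightarrow$ (iii), the forward direction is the observation that an eigenvector $\xi$ of $\tilde M_A(L,\lambda)$ for an eigenvalue $\e^{i\theta}$ on the unit circle produces, via the complex first-order system, a quasi-periodic solution $(y,y')(x+2L)=\e^{i\theta}(y,y')(x)$ whose real and imaginary parts give a bounded solution of \eqref{eq:hillsys} on $\R$. Conversely, if $\Sp(M_A)\cap\{|\gamma|=1\}=\emptyset$, the phase space $\R^{4n}$ splits into $M_A$-invariant stable ($|\gamma|<1$) and unstable ($|\gamma|>1$) subspaces, and every nonzero initial datum has a nontrivial component in at least one of them, forcing the orbit to grow unboundedly either forwards or backwards in time. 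Finally, (i) $\Leftrightarrow$ (v) on $L^2(\R)$ follows from the unitary $W_\theta$ of \eqref{defUtheta} identifying $H$ with $\oplus\int_0^{2\pi}H_\theta\,d\theta/(2\pi)$; standard direct-integral theory gives $\Sp(H)=\overline{\bigcup_{\theta}\Sp(H_\theta)}$, and analytic (hence continuous) dependence of the discrete eigenvalues of $H_\theta$ on $\theta$ makes the union of resulting bands already closed. For $BUC(\R)$, the analogous statement (i) $\Leftrightarrow$ (ii) is the standard fact that for bounded periodic-coefficient operators the $BUC$-spectrum coincides with the set of $\lambda$ admitting a nontrivial bounded solution of $(H-\lambda)y=0$, and combining this with (ii) $\Leftrightarrow$ (v) via (iii), (iv) closes the chain.

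The main technical obstacle I anticipate is the careful justification of (i) $\Leftrightarrow$ (v) on $L^2(\R)$ — specifically, that $\bigcup_{\theta\in[0,2\pi]}\Sp(H_\theta)$ is already closed, so that no closure is needed in the band description of $\Sp(H)$. This rests on the analytic-perturbation-theoretic continuity of $\theta\mapsto$ the $k$-th eigenvalue of $H_\theta$, together with compactness of $[0,2\pi]$; the remaining steps are either classical ODE arguments or direct from the definitions.
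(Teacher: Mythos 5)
Your argument is correct in substance, but it takes a different route from the paper in one specific sense: the paper's proof of this proposition is essentially a citation --- the equivalences involving bounded solutions, the monodromy spectrum on the unit circle, and the $BUC$-spectrum are quoted from Proposition 2.1 of \cite{Ga93}, the $L^2$ direct-integral statement from \cite[Section XIII.16]{RS78}, and the equivalence of (iii), (iv), (v) is deferred to the later Proposition \ref{cor:mult} --- whereas you reconstruct these standard facts from scratch (realification bookkeeping $T:\bbC^{2n}\to\bbR^{4n}$ intertwining $e^{i\theta}$ with $I\otimes U(\theta)$, the Floquet dichotomy for (ii) $\Leftrightarrow$ (iii), and the Bloch--Floquet decomposition via $W_\theta$ for (i) $\Leftrightarrow$ (v)). What the paper's route buys is brevity and the reuse of Proposition \ref{cor:mult}; what yours buys is a self-contained proof, and your treatment of (iii) $\Leftrightarrow$ (iv) via $\Sp(M_A)=\Sp(\tilde M_A)\cup\overline{\Sp(\tilde M_A)}$ is exactly the mechanism underlying the paper's later argument with the map $T$. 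Two points deserve tightening. First, for closedness of $\bigcup_\theta\Sp(H_\theta)$, continuity of $\theta\mapsto\lambda_k(\theta)$ plus compactness of $[0,2\pi]$ only makes each band compact; since there are countably many bands you also need their local finiteness, which here follows from the min--max bound $\lambda_k(\theta)\le -\big((\theta+2\pi k)/(2L)\big)^2+\|V\|_\infty$ showing the bands accumulate only at $-\infty$. Second, in the $BUC$ case the implication ``$\lambda\in\Sp(H)$ $\Rightarrow$ bounded solution'' is not a triviality: its standard proof runs through the exponential dichotomy of \eqref{eq:hillsys} when no Floquet multiplier lies on the unit circle, which yields a bounded Green's function and hence bounded invertibility of $H-\lambda$ on $BUC(\R)$; this is precisely the content of \cite[Proposition 2.1]{Ga93} that the paper cites, so invoking it as ``standard'' is acceptable but should be accompanied by that reference or by the dichotomy argument you already set up for (ii) $\Leftrightarrow$ (iii).
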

\begin{proof} This follows immediately from Proposition 2.1 in \cite {Ga93} and its proof and from the results in \cite[Section XIII.16]{RS78}. The equivalence of the last three assertions is also proved in a slightly more general Proposition \ref{cor:mult} below.\end{proof}

We will now introduce a family of systems of equations parametrized by $s\in(0,L]$ which augment (\ref{eq:hillsys}). Each system will be a linear constant coefficient system whose solutions satisfy the same boundary condition (\ref{eq:pereigen}) as our original system but with $L$ replaced by $s$.  To this end let us consider the system
\begin{equation}\label{eq:const}
\begin{split}
\begin{pmatrix} \zeta \\ \xi \end{pmatrix} ' = \begin{pmatrix} \frac{i \theta}{2 s} & 0 \\ 0 & \frac{i \theta}{2s} \end{pmatrix} \begin{pmatrix} \zeta \\ \xi \end{pmatrix}, \quad \zeta, \xi:\R \to \C^n, \theta\in[0,2\pi], \, s\in(0,L].
\end{split}
\end{equation}
Setting 
\begin{equation}\begin{split}\label{Defwz}
w_i &:= (\Re(\zeta_i), \Im(\zeta_i))^\top \in \R^2, \quad w := (w_1, \ldots, w_n)^\top \in \R^{2n},\\ 
z_i & := (\Re(\xi_i), \Im(\xi_i))^\top \in \R^2, \quad z := (z_1, \ldots,z_n)^\top\in \R^{2n},\end{split}\end{equation} we observe that $w$ and $z$ satisfy the following system of ODEs: 
\begin{equation}\label{eq:rot}
\begin{pmatrix} w \\ z \end{pmatrix}' = \begin{pmatrix}  I_n \otimes \mathfrak{u}(s,\theta) & 0 \\ 0& I_n \otimes \mathfrak{u}(s,\theta) \end{pmatrix} \begin{pmatrix} w \\ z \end{pmatrix},\quad
 \mathfrak{u}(s,\theta) := \begin{pmatrix} 0 & \frac{-\theta}{2s} \\ \frac{\theta}{2s} & 0 \end{pmatrix}.
 \end{equation}
 Any solution of \eqref{eq:const}, respectively, \eqref{eq:rot} will automatically satisfy the same boundary conditions as in \eqref{thetaBC}, respectively, \eqref{eq:pereigen}, with $L$ replaced by $s$, that is, the boundary conditions $( \zeta(s), \xi(s))^\top = e^{i \theta} (\zeta(-s), \xi(-s))^\top$, respectively, 
\begin{equation}\label{eq:wpereigen}
\begin{pmatrix} w(s) \\ z(s) \end{pmatrix} 
= \begin{pmatrix} I_n \otimes U(\theta) & 0 \\ 0 & I_n \otimes U(\theta) \end{pmatrix}\begin{pmatrix} w(-s) \\ z(-s) \end{pmatrix}.
\end{equation}
As before, sometimes it is convenient to write equation (\ref{eq:rot}) in a more condensed form denoting ${\bf w} := (w, z)^\top$, and writing equation (\ref{eq:rot}) as the following equation with $x$-independent coefficient:
\begin{equation}\label{eq:rotshort}
{\bf w} ' = B(s,\theta){\bf w}, \quad B(s,\theta):=I_{2n} \otimes \mathfrak{u}(s,\theta).
\end{equation}
For each $s\in(0,L]$, we let $\Psi_B(x,s)$ denote the fundamental matrix solution to the equation (\ref{eq:rot}) such that $\Psi_B(-L,\lambda) = \id_{4n}$, and remark that 
\begin{equation}\label{defPsiB}
\Psi_B(x,s) = I_{2n} \otimes e^{\mathfrak{u}(s,\theta)(x+L)} = I_{2n} \otimes
\begin{pmatrix} 
\cos \frac{\theta}{2s}(x+L) & -\sin \frac{\theta}{2s}(x+L) \\ \sin \frac{\theta}{2s}(x+L) & \cos \frac{\theta}{2s}(x+L) 
 \end{pmatrix}
\end{equation}
is an orthogonal matrix: $\Psi_B(x,s)^\top=\big(\Psi_B(x,s)\big)^{-1}$.

It is sometimes convenient to combine (\ref{eq:hillp}) and (\ref{eq:rotshort}) as follows: 
\begin{align}\label{eq:augodeshort}
&\begin{pmatrix} {\bf p } \\ {\bf w} \end{pmatrix}'   = \begin{pmatrix} A (x, \lambda) & 0_{4n} \\ 0_{4n} & B(s,\theta) \end{pmatrix}
 \begin{pmatrix} {\bf p } \\ {\bf w} \end{pmatrix},\, x\in[-L,L],\,\theta\in[0,2\pi],\, s\in(0,L].
 \end{align}

We will now
reformulate the boundary value problems for equations \eqref{eq:hillsys} and \eqref{eq:rot} with $s=L$ in a way amenable for symplectic analysis.
We consider $X$ defined in \eqref{DefX1} as a $4n$-plane in $\R^{8n}$. We claim that $\lambda$ is a $\theta$-eigenvalue of equation (\ref{eq:hill}) if and only if there is a nonzero solution to the following (augmented) boundary value problem: 
\begin{align}\label{eq:Laugode} 
& \begin{pmatrix} p \\ q \\ w \\ z \end{pmatrix}'  = \begin{pmatrix} 
0 & I_{2n} & 0 & 0 \\ 
\lambda I_{2n} -I_2 \otimes V(x) & 0 & 0 & 0 \\
0 & 0 & I_n \otimes \mathfrak{u}(L,\theta) & 0 \\
0 & 0 & 0 & I_n \otimes \mathfrak{u}(L,\theta)
\end{pmatrix} 
\begin{pmatrix} p \\ q \\ w \\ z \end{pmatrix}, \\
\label{eq:Laugbdc}
& 
 \big(p(-L), q(-L), w(-L), z(-L)\big)^\top,
\big(p(L), q(L), w(L), z(L)\big)^\top \in X.
\end{align}
It is convenient to write (\ref{eq:Laugode}) and (\ref{eq:Laugbdc}) as follows: 
\begin{align}\label{eq:Laugodeshort}
&\begin{pmatrix} {\bf p } \\ {\bf w} \end{pmatrix}'   = \begin{pmatrix} A (x, \lambda) & 0_{4n} \\ 0_{4n} & B(L,\theta) \end{pmatrix}
 \begin{pmatrix} {\bf p } \\ {\bf w} \end{pmatrix},\\
\label{Laugbdcshort}
& 
\begin{pmatrix} {\bf p }(-L) \\ {\bf w}(-L) \end{pmatrix}, \begin{pmatrix} {\bf p }(L) \\ {\bf w}(L) \end{pmatrix} \in X.
\end{align} 
To justify the claim, we note that if $\bw$ satisfies \eqref{eq:rotshort} with $s=L$ then $\bw$ automatically satisfies \eqref{eq:wpereigen}
with $s=L$. Thus, if \eqref{Laugbdcshort} holds then $\bp$ satisfies \eqref{eq:eigenshort}. Conversely, given a $\bp$ satisfying \eqref{eq:eigenshort}, pick a solution $\bw$ of \eqref{eq:rotshort} 
with $s=L$ such that $\bw(-L)=\bp(-L)$. Then \eqref{Laugbdcshort} holds.

\section{A symplectic approach to counting eigenvalues}\label{secsa}
We begin by recalling some notions regarding symplectic structures and the Maslov index; for a detailed exposition see \cite{arnold67,Arn85,rs93,RoSa95} and a review \cite{F},
for a brief but extremely informative account see \cite{FJN}.

A skew-symmetric non-degenerate quadratic form $\omega$ on $\R^{2n}$ is called symplectic. Symplectic forms are in one-to-one correspondence with orthogonal skew-symmetric matrices $\Omega$, such that $\Omega^\top=\Omega^{-1}=-\Omega$, via the relation $\omega(v_1,v_2)=\langle  v_1, \Omega v_2\rangle_{\R^{2n}}$, $v_1, v_2\in\R^{2n}$. A real Lagrangian plane $V$ is an $n$-dimensional subspace in $\R^{2n}$ such that $\omega(v_1,v_2)=0$ for all $v_1,v_2\in V$. 
The set of all Lagrangian planes in $\R^{2n}$ is denoted by $\Lambda(n)$.

Let $\train(V)$ denote the train of a Lagrangian plane $V\in\Lambda(n)$, that is the set of all Lagrangian planes whose intersection with $V$ is non trivial. Obviously, $\train(V)=\cup_{k=1}^n\cT_k(V)$ where $\cT_k(V)=\big\{V_0\in\Lambda(n)\,\big|\, \dim(V\cap V_0)=k\big\}$. Each set $\cT_k(V)$ is an algebraic submanifold of $\Lambda(n)$ of codimension $k(k+1)/2$. In particular, $\codim\cT_1(V)=1$; moreover, $\cT_1(V)$ is two-sidedly imbedded in $\Lambda(n)$, that is, there is a continuous vector field tangent to $\Lambda(n)$ which is transversal to $\cT_1(V)$. Hence, one can speak about the positive and negative sides of $\cT_1(V)$. 
Thus, given a smooth closed curve $\Phi$ in $\Lambda(n)$ that intersects $\train V$ transversally (and thus in $\cT_1(V))$, one can define the {\em Maslov index} $\mi(\Phi,V)$ as the signed number of intersections.

We now recall a more detailed definition of the Maslov index as well as how to calculate it from local data. Let $\Phi:[a,b]\to\Lambda(n)$ be a smooth path. A {\em crossing} is a point $t_0\in(a,b)$ of intersection of the path $\{\Phi(t): t\in[a,b]\}$ with $\train(V)$.  Let $t_0\in(a,b)$ be a crossing for a smooth path $\Phi$, that is, assume that $\Phi(t_0)\cap V\neq\{0\}$. Let $V^\bot$ be a subspace in $\R^{2n}$ transversal to $\Phi(t_0)$. Then $V^\bot$ is transversal to $\Phi(t)$ for all $t\in[t_0-\varepsilon,t_0+\varepsilon]$ for $\varepsilon>0$ small enough.
Thus, there exists a smooth family of matrices, $\phi(t)$, viewed as operators from $\Phi(t_0)$ into $V^\bot$, so that $\Phi(t)$ is the graph of $\phi(t)$ for $|t-t_0|\le\varepsilon$. The bilinear form $Q_\cM=Q_\cM(\Phi(t_0),V)$ defined by
\begin{equation}\label{QMF}
Q_\cM({\rm v}, {\rm w})=\frac{d}{dt}\omega({\rm v}, \phi(t){\rm w})\big|_{t=t_0} \,\text{ for }\, {\rm v}, {\rm w}\in \Phi(t_0)\cap V,
\end{equation} is called the {\em crossing form}. 

A crossing is called {\em regular} if the crossing form is non degenerate. At a regular crossing $t_0$, denote the signature of the crossing form by $\sgn Q_\cM(\Phi(t_0),V)$.  The {Maslov index} $\mi(\Phi, V)$ of the path $\Phi$ with only regular crossings of $\train (V)$ is then defined as 
\begin{equation}\label{DefMI}\begin{split}
\mi(\Phi, V) &:= \frac{1}{2} \sgn Q_\cM(\Phi(a),V) \\ &\quad + \sum_{t \in (a,b)} \sgn Q_\cM(\Phi(t),V) + \frac{1}{2} \sgn Q_\cM(\Phi(b),V),
\end{split}
\end{equation}
\noindent where the summation above is over all crossings $t$ (one can verify that regular crossings are isolated \cite{rs93}). At the endpoints, take the appropriate left or right limit definition of the derivative in \eqref{QMF} to compute the bilinear form $Q_\cM$ (and hence its signature). We remark that now we have a Maslov index even if the crossing does not take place in $\cT_1$. It will sometimes be convenient to refer to the absolute value of the local Maslov index of a crossing as the {\em multiplicity} of the crossing. In the sequel, a curve with only regular crossings will also be called {\em regular}. From the context it should always be clear whether regular refers to a crossing or to the curve itself. 

The important features of the Maslov index for this work are summarized below.
\begin{theorem} \cite{rs93}
\begin{enumerate}
\item \label{it:nat} {\em (Naturality)} If $T$ is a symplectic linear transformation then 
$$ \mi(T \Phi(t), T V) = \mi(\Phi, V). $$
\item \label{it:cat} {\em (Catenation)} For $a < c < b $ 
$$\mi(\Phi, V)  = \mi(\Phi(\cdot)\big|_{[a,c]},V) +  \mi(\Phi(\cdot)\big|_{[c,b]},V).$$
\item \label{it:hom} {\em (Homotopy)} Two paths $\Phi_0, \Phi_1 :[a,b] \to \Lambda(n)$, with $\Phi_0(a) = \Phi_1(a)$ and $\Phi_0(b) = \Phi_1(b)$, are homotopic with fixed endpoints if and only if they have the same Maslov index. 
\end{enumerate}
\end{theorem}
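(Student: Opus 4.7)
The plan is to handle the three items in order of increasing subtlety. All three are standard facts from \cite{rs93}; my task is to sketch the mechanism behind each, reducing them as far as possible to local data at regular crossings.

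For naturality, the point is that a symplectic $T$ preserves $\omega$, so it identifies the notion of crossing on the two sides: $\Phi(t)\cap V\neq\{0\}$ if and only if $T\Phi(t)\cap TV\neq\{0\}$, and these crossings occur at exactly the same parameter values. To show the crossing forms agree, I would pick a complement $V^{\perp}$ to $\Phi(t_0)$ and use $TV^{\perp}$ as a complement to $T\Phi(t_0)$. If $\phi(t)\colon\Phi(t_0)\to V^\perp$ is the family of graphing operators for $\Phi$ near $t_0$, then $T\phi(t)T^{-1}\colon T\Phi(t_0)\to TV^\perp$ is the corresponding family for $T\Phi$. Invariance of $\omega$ under $T$ then gives $Q_\cM(T{\rm v},T{\rm w})=\frac{d}{dt}\omega(T{\rm v},T\phi(t)T^{-1}T{\rm w})|_{t_0}=\frac{d}{dt}\omega({\rm v},\phi(t){\rm w})|_{t_0}=Q_\cM({\rm v},{\rm w})$, so signatures at corresponding crossings coincide termwise in \eqref{DefMI}.

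For catenation, I would bookkeep the contributions in \eqref{DefMI} across the three subintervals $[a,c]$, $[c,b]$, and $[a,b]$. An interior crossing $t\in(a,c)$ contributes $\sgn Q_\cM(\Phi(t),V)$ to both $\mi(\Phi|_{[a,c]},V)$ and $\mi(\Phi,V)$; the same is true for interior crossings in $(c,b)$. The endpoint $a$ contributes $\tfrac12\sgn Q_\cM(\Phi(a),V)$ on both sides, and likewise for $b$. The only subtle case is $t=c$: if $c$ is a crossing, then it contributes $\tfrac12\sgn Q_\cM(\Phi(c),V)$ as a right endpoint of $\Phi|_{[a,c]}$ and $\tfrac12\sgn Q_\cM(\Phi(c),V)$ as a left endpoint of $\Phi|_{[c,b]}$, summing to the full interior contribution $\sgn Q_\cM(\Phi(c),V)$ in $\mi(\Phi,V)$. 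If $c$ is not a crossing there is nothing to check. So the identity holds termwise.

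Homotopy is the substantive item and the one I expect to be the main obstacle. One direction (homotopic implies equal Maslov index) should follow from a transversality argument: given a smooth homotopy $H\colon[0,1]\times[a,b]\to\Lambda(n)$ with fixed endpoints, a generic perturbation makes the two-parameter map transverse to each stratum $\cT_k(V)$ inside $\Lambda(n)$, so the preimage of $\train(V)$ is a disjoint union of smooth arcs and isolated points, and one tracks how the signed count of crossings on each horizontal slice changes under the homotopy (arcs either stay in the interior, producing no change, or slide out through an endpoint, in which case the endpoint hypothesis prevents a net change; interior tangencies contribute cancelling pairs). The converse direction requires a topological input: the determinant-squared map identifies $\Lambda(n)$ with $U(n)/O(n)$, from which $\pi_1(\Lambda(n))\cong\Z$, and the Maslov index coincides, under the Robbin--Salamon normalization, with this $\Z$-valued winding invariant; since $\Lambda(n)$ has the homotopy type of a space whose fundamental group is $\Z$ and simply connected higher covers, two paths with the same endpoints and the same winding number are homotopic rel endpoints. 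The genuine work is in reconciling the local crossing-form definition \eqref{DefMI} with the global winding definition, a computation I would import wholesale from \cite{rs93} rather than redo here.
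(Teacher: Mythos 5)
The paper does not prove this theorem at all: it is imported verbatim from \cite{rs93}, so there is no internal argument to compare yours against. Judged on its own terms, your treatment of naturality and catenation is correct and essentially complete at the level of the definition \eqref{DefMI}: for naturality, conjugating the graphing family by $T$ and using invariance of $\omega$ under $T$ identifies the crossing forms termwise (the only point worth making explicit is that the crossing form is independent of the choice of transversal complement, or else that $TV^\perp$ is a legitimate choice for the transformed path); for catenation, the half-weight convention at endpoints is exactly what makes the bookkeeping at $t=c$ work, together with the fact that for a smooth path the left and right one-sided forms at $c$ coincide.

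The genuine gap is in item (3). The direction ``same index $\Rightarrow$ homotopic rel endpoints'' is reduced by you to the identification of the crossing-form count \eqref{DefMI} with the $\Z$-valued winding invariant coming from $\pi_1(\Lambda(n))\cong\Z$ (via the square of the determinant on $U(n)/O(n)$), and you state explicitly that you would import this identification wholesale from \cite{rs93}. That identification is precisely the substantive content of the homotopy statement, so deferring it means item (3) is cited, not proved. The transversality sketch for the other direction is also schematic: for a two-parameter homotopy one needs that generic transversality to the strata $\cT_k(V)$ (of codimension $k(k+1)/2\ge 3$ for $k\ge 2$) makes the preimage of $\train(V)$ a one-manifold with boundary, and one must then verify that at interior tangencies crossings are created or destroyed in pairs of opposite sign, and that crossings sliding into the fixed-endpoint slices contribute consistently with the half-weights in \eqref{DefMI}; none of this is carried out. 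Since the paper itself simply cites \cite{rs93}, your proposal is at the same level of rigor as the source text, but as a standalone proof it establishes only items (1) and (2).
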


\begin{remark}[The generic case] \label{rem:generic}
A crossing $t_0$ is called {\em simple} if it is regular and $\Phi(t_0) \in \cT_1(V)$. A curve has only simple crossings if and only if  it is transverse to every $\cT_k(V)$.  Suppose that a curve $\Phi:[a,b] \to \Lambda(n)$ with $\Phi(a), \Phi(b) \in \cT_0(V) :=\big\{V_0\in\Lambda(n)\,\big|\, \dim(V\cap V_0)=0\big\}$ has only simple crossings. Then the two-sidedness of $\cT_1(V)$ allows one to define $m_+$ to be the number of crossings by which $\Phi(t)$ passes from the negative side of $\cT_1(V)$ to the positive side, and $m_-$ to be the number of crossings from negative to positive. We then have that $\mi(\Phi,V) = m_+ - m_-$. 
 \end{remark}
 \begin{remark} \label{rem:multiplicity}
We remark that at a regular crossing $t_0$ the Maslov index of the path $\Phi : [t_0-\varepsilon, t_0 + \varepsilon ] \to \Lambda(n) $, for small enough $\varepsilon$, is equal to the signature of the crossing form at the crossing. In particular, the crossing is called {\em positive} (respectively {\em negative}) if the crossing form is positive (negative) definite. In this case the local Maslov index at the crossing is equal to plus (respectively minus) the dimension of the subspace $\Phi(t_0) \cap V$ (i.e. the multiplicity of the crossing is the real dimension of this subspace). 
 \end{remark}
We will now return to the augmented system \eqref{eq:augodeshort}.
Following \cite{DJ11}, for each $\lambda\in\R$ and $s\in(0,L]$ we now define the following set of vector valued functions on $[-L,L]$:
\begin{equation}\begin{split}
Y_{s,\lambda} =\Big\{& ({\bf p } , {\bf w} )^\top  \Big|\, {\bf p } , {\bf w} \in AC_{\rm loc}([-L,L]),\\ & \quad \text{and $({\bf p } , {\bf w})^\top$ is a  solution of  (\ref{eq:augodeshort}) on $[-L,L]$} \Big\}.\end{split}\label{defY}
\end{equation}
That is, we consider the ($8n$ dimensional) solution space to the augmented equation \eqref{eq:augodeshort}, defined on $[-L,L]$, without any boundary conditions at all. We stress that by solutions $(\bp,\bw)^\top$ of \eqref{eq:augodeshort} on $[-L,L]$ we understand the mild solutions, that is, absolutely continuous vector valued functions such that \eqref{eq:augodeshort} holds for almost all $x\in[-L,L]$; in other words,
$\bp(x)=\Psi_A(x,\lambda)\bp(-L)$ and $\bw(x)=\Psi_B(x,s)\bw(-L)$, $x\in[-L,L]$, where $\Psi_A(\cdot,\lambda)$ and $\Psi_B(\cdot,s)$ are the fundamental matrix solutions to equations \eqref{eq:hillsys} and \eqref{eq:rot}, respectively.

Next, for each $\lambda\in\R$ and $s \in (0,L]$, let us define the trace map $\Phi^{\lambda}_s: Y_{s,\lambda} \to \R^{16n}$ by the following formula: 
\begin{equation}\label{eq:trace}
 \Phi^{\lambda}_s:  \big(\bp, \bw\big)^\top \mapsto \big( \bp(-s), \bw(-s),  \bp(s), \bw(s) \big)^\top \in \R^{16n}.
\end{equation}
We remark that $ \Phi^{\lambda}_s$ can be identified with the following $(16n\times 8n)$ matrix,
\begin{equation}\label{Defpsihat}
\widehat{\Phi}_s^\lambda=\begin{pmatrix}\Psi_A(-s,\lambda)&0_{4n}\\
0_{4n}&\Psi_B(-s,s)\\\Psi_A(s,\lambda)&0_{4n}\\
0_{4n}&\Psi_B(s,s)\end{pmatrix},
\end{equation} since for the solution $\big(\bp, \bw\big)^\top\in Y_{s,\lambda}$ given by $\bp(x)=\Psi_A(x,\lambda)\bp(-L)$ and $\bw(x)=\Psi_B(x,s)\bw(-L)$, clearly,  the vector $\Phi^{\lambda}_s\big((\bp, \bw)^\top\big)\in\R^{16n}$ is the product of the matrix $\widehat{\Phi}_s^\lambda$ and the vector $\big(\bp(-L),\bw(-L)\big)^\top\in\R^{8n}$.

Let us introduce the $(16n\times 16n)$ orthogonal skew-symmetric matrix $\Omega$ (and thus a symplectic structure on $\R^{16n}$) by the formula
\begin{equation}\label{eq:omega}
 \Omega = (J \otimes I_{2n}) \oplus (J^\top\otimes I_{2n}) \oplus (J^\top \otimes I_{2n}) \oplus (J\otimes I_{2n}),\quad J=\begin{pmatrix} 0 & 1 \\ -1 & 0 \end{pmatrix},
 \end{equation}
where $J$  is the standard symplectic matrix.
\begin{theorem}\label{th:lagrange}
For all  $s \in (0,L]$ and $\lambda \in \R$ the plane $\Phi^{\lambda}_s(Y_{s,\lambda})$ belongs to the space $\Lambda(8n)$ of Lagrangian $8n$-planes in $\R^{16n}$, with the Lagrangian structure $\omega(v_1,v_2)=\langle v_1,\Omega v_2\rangle_{\R^{16n}}$ given by $\Omega$ defined in \eqref{eq:omega}.
\end{theorem}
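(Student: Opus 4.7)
The strategy is the standard one: check the dimension is maximal ($8n$ in $\R^{16n}$), then verify self-orthogonality with respect to $\omega$ by reducing it to the vanishing of two Wronskian-type quantities along the flow.

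First I would record that $\Omega$ really is a symplectic matrix: it is manifestly orthogonal and skew, since $J^\top = -J$ and the direct summands of $\Omega$ are of the form $(\pm J)\otimes I_{2n}$. Next, the dimension count: $Y_{s,\lambda}$ is the full solution space of the $8n$-dimensional linear system \eqref{eq:augodeshort}, hence has $\dim Y_{s,\lambda}=8n$. The trace map $\Phi_s^\lambda$ is injective because the matrix $\widehat\Phi_s^\lambda$ in \eqref{Defpsihat} has an invertible $(8n)\times(8n)$ upper block $\diag(\Psi_A(-s,\lambda),\Psi_B(-s,s))$ (fundamental matrices are invertible). Therefore $\dim\Phi_s^\lambda(Y_{s,\lambda}) = 8n$, exactly half of $16n$.

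The crux is to show that $\Phi_s^\lambda(Y_{s,\lambda})$ is isotropic. Take two solutions $(\bp_j,\bw_j)^\top\in Y_{s,\lambda}$, $j=1,2$, and let $v_j=\Phi_s^\lambda\big((\bp_j,\bw_j)^\top\big)$. Using $J^\top=-J$ and the block decomposition of $\Omega$, one rewrites
\begin{equation*}
\omega(v_1,v_2)=\big[F_p(-s)-F_p(s)\big]-\big[F_w(-s)-F_w(s)\big],
\end{equation*}
where
\begin{equation*}
F_p(x):=\langle \bp_1(x), (J\otimes I_{2n})\,\bp_2(x)\rangle_{\R^{4n}},\qquad
F_w(x):=\langle \bw_1(x),(J\otimes I_{2n})\,\bw_2(x)\rangle_{\R^{4n}}.
\end{equation*}
The goal is thus to prove that $F_p$ and $F_w$ are \emph{constant} in $x$ on $[-L,L]$. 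Differentiating and using \eqref{eq:hillp}, \eqref{eq:rotshort} yields
\begin{align*}
F_p'(x)&=\langle\bp_1(x),\big[A(x,\lambda)^\top(J\otimes I_{2n})+(J\otimes I_{2n})A(x,\lambda)\big]\bp_2(x)\rangle,\\
F_w'(x)&=\langle\bw_1(x),\big[B(s,\theta)^\top(J\otimes I_{2n})+(J\otimes I_{2n})B(s,\theta)\big]\bw_2(x)\rangle.
\end{align*}

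For the first vanishing I would compute directly: a quick block multiplication shows
\begin{equation*}
(J\otimes I_{2n})A(x,\lambda)=\begin{pmatrix}\lambda I_{2n}-V(x)\otimes I_2 & 0\\ 0 & -I_{2n}\end{pmatrix},
\end{equation*}
which is symmetric precisely because $V(x)=V(x)^\top$ (and $I_2$ is symmetric); hence $A^\top(J\otimes I_{2n})+(J\otimes I_{2n})A=0$ and $F_p$ is constant. For the second vanishing I would exploit the tensor-product structure: since $\mathfrak{u}(s,\theta)^\top=-\mathfrak{u}(s,\theta)$, we have $B(s,\theta)^\top=-B(s,\theta)$, so it suffices to show that $B(s,\theta)$ commutes with $J\otimes I_{2n}$. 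Writing $\bw=(w,z)^\top$ with $w,z\in\R^{2n}$, the map $J\otimes I_{2n}$ sends $(w,z)\mapsto (z,-w)$, while $B(s,\theta)=I_{2n}\otimes\mathfrak{u}(s,\theta)$ acts as $(w,z)\mapsto(\mathfrak{u}w,\mathfrak{u}z)$; the two clearly commute, so $B^\top(J\otimes I_{2n})+(J\otimes I_{2n})B = -B(J\otimes I_{2n})+B(J\otimes I_{2n})=0$, and $F_w$ is constant as well.

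Combining these two constancy statements gives $\omega(v_1,v_2)=0$ for all $v_1,v_2\in\Phi_s^\lambda(Y_{s,\lambda})$, so the plane is isotropic; together with the dimension count this proves it is Lagrangian. The only genuinely subtle point is bookkeeping with the two different Kronecker-product conventions ($J\otimes I_{2n}$ versus $I_{2n}\otimes\mathfrak{u}$); I expect that to be the only place one could slip, so I would carry it out explicitly as above rather than invoke a general lemma.
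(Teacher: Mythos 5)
Your proposal is correct and takes essentially the same route as the paper: both reduce isotropy of $\Phi_s^\lambda(Y_{s,\lambda})$ to the constancy in $x$ of the pairings $\langle\bp_1(x),(J\otimes I_{2n})\bp_2(x)\rangle_{\R^{4n}}$ and $\langle\bw_1(x),(J\otimes I_{2n})\bw_2(x)\rangle_{\R^{4n}}$, which the paper gets by writing the boundary terms as an integral of a derivative and invoking the symmetry relations \eqref{symm}, exactly the relations you verify by direct block multiplication and the commutation of $J\otimes I_{2n}$ with $I_{2n}\otimes\mathfrak{u}$. Your explicit dimension/injectivity count via the invertible upper block of $\widehat{\Phi}_s^\lambda$ is a harmless addition that the paper leaves implicit.
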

\begin{proof}
Equations \eqref{eq:hillp} and \eqref{eq:rotshort} are Hamiltonian, with the symplectic structure defined by the matrices 
\begin{equation}\label{eq:newJ}
J_n:=J\otimes I_{2n}\,\text{ and }\, J^{(n)}:=J^\top\otimes I_{2n}, 
\end{equation}
respectively. In particular,
\begin{equation}\label{symm}
J_nA(x,\lambda)=\big(J_nA(x,\lambda)\big)^\top,\quad
J^{(n)}B(s,\theta)=\big(J^{(n)}B(s,\theta)\big)^\top.
\end{equation}
Writing \eqref{eq:omega} as 
\begin{equation}\label{newOm}
\Omega=J_n\oplus J^{(n)}\oplus (-J_n)\oplus(-J^{(n)}),
\end{equation} for any two vectors from $\Phi_s^\lambda(Y_{s,\lambda})$, $v_1=\big(\bp_1(-s), \bw_1(-s), \bp_1(s), \bw_1(s)\big)^\top$ and $v_2=\big(\bp_2(-s), \bw_2(-s), \bp_2(s), \bw_2(s)\big)^\top$, we infer: 
\begin{align*}
\langle v_1,\Omega& v_2\rangle_{\R^{16n}}=
\langle \bp_1(-s),J_n \bp_2(-s)\rangle_{\R^{4n}}+
\langle \bw_1(-s),J^{(n)} \bw_2(-s)\rangle_{\R^{4n}}\\
&\qquad-\langle \bp_1(s),J_n \bp_2(s)\rangle_{\R^{4n}}-
\langle \bw_1(s),J^{(n)} \bw_2(s)\rangle_{\R^{4n}}\\
&=\int^{-s}_s\frac{d}{dx}\Big(\langle \bp_1(x),J_n \bp_2(x)\rangle_{\R^{4n}}+
\langle \bw_1(x),J^{(n)} \bw_2(x)\rangle_{\R^{4n}}\Big)\,dx\\
&=\int^{-s}_s\Big(\langle \bp_1'(x),J_n \bp_2(x)\rangle_{\R^{4n}}+\langle \bp_1(x),J_n \bp_2'(x)\rangle_{\R^{4n}}\\&\qquad
+\langle \bw_1'(x),J^{(n)} \bw_2(x)\rangle_{\R^{4n}}
+\langle \bw_1(x),J^{(n)} \bw_2'(x)\rangle_{\R^{4n}}\Big)\,dx\\
&=\int^{-s}_s\Big(\langle A(x,\lambda)\bp_1(x),J_n \bp_2(x)\rangle_{\R^{4n}}+\langle \bp_1(x),J_n A(x,\lambda)\bp_2(x)\rangle_{\R^{4n}}\\&\qquad
+\langle B(s,\theta)\bw_1(x),J^{(n)} \bw_2(x)\rangle_{\R^{4n}}
+\langle \bw_1(x),J^{(n)} B(s,\theta)\bw_2(x)\rangle_{\R^{4n}}\Big)\,dx\\
&=\int^{-s}_s\Big(-\langle J_nA(x,\lambda)\bp_1(x), \bp_2(x)\rangle_{\R^{4n}}+\langle \bp_1(x),J_n A(x,\lambda)\bp_2(x)\rangle_{\R^{4n}}\\&\qquad
-\langle J^{(n)} B(s,\theta)\bw_1(x), \bw_2(x)\rangle_{\R^{4n}}
+\langle \bw_1(x),J^{(n)} B(s,\theta)\bw_2(x)\rangle_{\R^{4n}}\Big)\,dx\\
&=0,\end{align*} 
where in the last two lines we used  $(J_n)^\top=-J_n$,
$(J^{(n)})^\top=-J^{(n)}$ and \eqref{symm}.
\end{proof}
 We remark that $X \times X$ with $X$ defined in \eqref{DefX1} is a Lagrangian plane in $\R^{16n}$ with the same symplectic structure given by $\Omega$ (indeed, this was why $\Omega$ was chosen in the first place). This can be verified by a straightforward calculation. 

\begin{definition}\label{def3.5}
  For a given $\lambda$, a point $s\in(0,L]$ is called a {\em ($\lambda$-)conjugate point} if $\Phi^{\lambda}_s(Y_{s,\lambda})\in \train(X \times X)$,
  where $X$ is defined in \eqref{DefX1}. 
  \end{definition}
The latter inclusion means that there exists a solution of the system of equations \eqref{eq:hillsys}, \eqref{eq:rot} on the segment $[-s,s]$ satisfying
the boundary conditions \eqref{eq:pereigen} with $L$ replaced by $s$, that is, the boundary condtions
\begin{equation}\label{eq3.28}
\begin{pmatrix} p(s) \\ q(s) \end{pmatrix} = \begin{pmatrix} I_n \otimes U(\theta)  & 0 \\ 0 & I_n \otimes U(\theta) \end{pmatrix} \begin{pmatrix} p(-s) \\ q(-s) \end{pmatrix}, 
\end{equation} 
and the boundary conditions \eqref{eq:wpereigen}.

Our next objective is to relate the crossings of the path $\big\{\Phi_\lambda^s(Y_{s,\lambda})\big\}$ and eigenvalues of differential operators $H_{\theta,s}$ in $L^2([-s,s])$ introduced as follows, cf.\ \eqref{defLtheta}. 
For any $s\in(0,L]$ and $\theta\in[0,2\pi]$, let $H_{\theta,s}=\frac{d^2}{dx^2}+V(x)$ with 
\begin{equation}\lb{defLthetaS}\begin{split}
 \dom(H_\theta)=\Big\{&
y\in L^2([-s,s])\Big|
 y,y'\in AC_{\rm loc}([-s,s]),\\ & y''\in L^2([-s,s])\text{ and the boundary condition \eqref{sthetaBC} holds}\Big\}.
\end{split}\end{equation}
In particular, $H_{\theta,L}=H_\theta$.
We remark that $y\in\ker\big(H_{\theta,s}-\lambda I_{L^2([-s,s])}\big)$ if and only if the vector valued function $\bp=(p,q)^\top$ defined in \eqref{Defpq} is a solution of \eqref{eq:hillp} on $[-s,s]$ that satisfies the boundary conditions \eqref{eq3.28}.
\begin{definition}
We say that $\lambda$ is an {\em $(\theta,s)$-eigenvalue} of equation (\ref{eq:hill}) if there is a nonzero solution of (\ref{eq:hillsys}) such that the boundary conditions (\ref{eq3.28}) are satisfied. 
\end{definition}
Recall that $\Psi_A(x,\lambda)$ is the fundamental matrix solution of the system \eqref{eq:hillsys}, and $M_A(s,\lambda)=\Psi_A(s,\lambda)\Psi_A(-s,\lambda)^{-1}$ is the propagator for $s\in(0,L]$ so that $\bp(s)=M(s,\lambda)\bp(-s)$ for a solution of \eqref{eq:hillp}.  Also, we recall that the multiplicity 
 of the eigenvalue $\lambda$ of the operator $H_\theta$ is the (complex) dimension of the solution space of the boundary value problem
 \eqref{eq:hill}, \eqref{thetaBC} on $[-L,L]$.
 
\begin{proposition}\label{cor:mult}
For any $\lambda\in\R$, $\theta\in[0,2\pi]$, and $s\in(0,L]$ the following assertions are equivalent:

 $(i)$\, $\lambda\in\Sp(H_{\theta,s})$ in $L^2([-s,s])$;

 $(ii)$\, $e^{i\theta}\in\Sp\big(M_A(s,\lambda)\big)$;

$(iii)$\, $s$ is a $\lambda$-conjugate point, that is, $\Phi_s^\lambda(Y_{s,\lambda})\in\train(X\times X)$.
 
\noindent  Moreover, the multiplicity 
 of the eigenvalue $\lambda$ of the operator $H_{\theta,s}$ is equal to the real dimension of the subspace $\Phi^{\lambda}_s(Y_{s,\lambda})\cap(X \times X)$. In particular, $\lambda$ is a $\theta$-eigenvalue of \eqref{eq:hill} if and only if $L$ is a $\lambda$-conjugate point, that is, $\Phi^{\lambda}_L(Y_{L,\lambda}) \in \train(X \times X)$, and $\lambda$ is an $(\theta,s)$-eigenvalue of \eqref{eq:hill} if and only $\Phi^{\lambda}_s(Y_{s,\lambda}) \in \train(X \times X)$.
 \end{proposition}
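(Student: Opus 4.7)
The plan is to pass among the three assertions using the explicit structure of the augmented system, with (iii) serving as the pivot. First I would unpack (iii): the condition $\Phi_s^\lambda((\bp,\bw)^\top)\in X\times X$, with $X$ as in \eqref{DefX1}, is simply the pair of coincidences $\bp(-s)=\bw(-s)$ and $\bp(s)=\bw(s)$. Since the $\bw$-system \eqref{eq:rotshort} has constant coefficients, a direct evaluation of \eqref{defPsiB} at $x=\pm s$ yields the key identity
\begin{equation*}
\Psi_B(s,s)\Psi_B(-s,s)^{-1}=I_{2n}\otimes e^{2s\,\mathfrak{u}(s,\theta)}=I_{2n}\otimes U(\theta),
\end{equation*}
because the argument $\tfrac{\theta}{2s}(x+L)$ changes by exactly $\theta$ as $x$ runs from $-s$ to $s$. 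Hence every solution of \eqref{eq:rotshort} automatically satisfies $\bw(s)=(I_{2n}\otimes U(\theta))\bw(-s)$, and the coincidence conditions collapse to the requirement that $\bp$ satisfy $\bp(s)=(I_{2n}\otimes U(\theta))\bp(-s)$. In terms of the propagator this becomes
\begin{equation*}
\big(M_A(s,\lambda)-I_{2n}\otimes U(\theta)\big)\bp(-s)=0\quad\text{for some nonzero }\bp(-s)\in\R^{4n}.
\end{equation*}

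This last condition is exactly (i) re-expressed in real form: via the dictionary \eqref{Defpq}, any nonzero complex $y\in\ker(H_{\theta,s}-\lambda I)$ corresponds bijectively to a nonzero real $\bp$ solving \eqref{eq:hillp} on $[-s,s]$ with the displayed boundary condition, and conversely. This already gives (i)$\iff$(iii). For (ii)$\iff$(iii) I would use the same dictionary: it identifies $\R^{4n}$ with $\C^{2n}$ so that $I_{2n}\otimes U(\theta)$ becomes scalar multiplication by $e^{i\theta}$ and the real matrix $M_A(s,\lambda)$ becomes the complex propagator $M_\C(s,\lambda)$ of the complex second-order system $y''+V(x)y=\lambda y$. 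Under this identification the real kernel of $M_A(s,\lambda)-I_{2n}\otimes U(\theta)$ corresponds to the complex kernel of $M_\C(s,\lambda)-e^{i\theta}I_{2n}$, which is nontrivial iff $e^{i\theta}\in\Sp(M_\C(s,\lambda))$; and since $M_A$ and $M_\C$ are real matrices whose complex spectra coincide as subsets of $\C$, this is equivalent to $e^{i\theta}\in\Sp(M_A(s,\lambda))$.

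For the multiplicity statement, the evaluation map $(\bp,\bw)^\top\mapsto\bp(-s)$ is a real-linear isomorphism from $\Phi_s^\lambda(Y_{s,\lambda})\cap(X\times X)$ onto $\ker_\R(M_A(s,\lambda)-I_{2n}\otimes U(\theta))$: it is injective because $\bp$ determines $\bw$ uniquely via $\bw(-L)=\Psi_B(-s,s)^{-1}\bp(-s)$, and surjective by the construction in the first paragraph. Under the real/complex dictionary this kernel is the real realization of $\ker(H_{\theta,s}-\lambda I)$, yielding the asserted equality of dimensions. The final two assertions about $\theta$-eigenvalues and $(\theta,s)$-eigenvalues are just the specializations of the three-way equivalence to $s=L$ and to general $s\in(0,L]$, respectively.

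The only genuinely nontrivial step is the propagator identity $\Psi_B(s,s)\Psi_B(-s,s)^{-1}=I_{2n}\otimes U(\theta)$ in the first paragraph; this is precisely the structural reason the constant coefficient $\tfrac{i\theta}{2s}$ was chosen in \eqref{eq:const}, and it is immediate from the closed form \eqref{defPsiB}. Everything else amounts to bookkeeping translating between the complex Schr\"odinger formulation on $\C^n$ and its real $\R^{4n}$ counterpart, together with the observation that the $\bw$-coordinate carries no genuine degrees of freedom once $\bp$ is fixed.
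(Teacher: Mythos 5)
Your proof is correct and follows essentially the same route as the paper: the pivotal fact that solutions of the constant-coefficient $\bw$-system automatically satisfy $\bw(s)=(I_{2n}\otimes U(\theta))\bw(-s)$ (your propagator identity $\Psi_B(s,s)\Psi_B(-s,s)^{-1}=I_{2n}\otimes U(\theta)$), reducing the trace condition in $X\times X$ to the boundary condition $\bp(s)=(I_{2n}\otimes U(\theta))\bp(-s)$, and the real/complex dictionary \eqref{Defpq} linking this to $\Sp(H_{\theta,s})$ and to $e^{i\theta}\in\Sp(M_A(s,\lambda))$. The only difference is organizational (you pivot on (iii) rather than running the cycle $(i)\Rightarrow(ii)\Rightarrow(iii)\Rightarrow(i)$), and your multiplicity argument via the injective-and-surjective evaluation map matches the paper's.
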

 
 \begin{proof} $(i) \Rightarrow (ii)$\, Take a nonzero $y\in\ker\big(H_{\theta,s}-\lambda I_{L^2([-s,s])}\big)$ and let $\by=(y,y')^\top$
 be the complex valued $(n\times 1)$ solution of the first order system
 \begin{equation}\label{CC}
 \by'=A^\bbC (x,\lambda)\by,\quad A^\bbC (x,\lambda)=\begin{pmatrix} 0_n&I_n\\ \lambda I_n-V(x)&0_n\end{pmatrix}
 \end{equation} 
 that satisfies the boundary condition \eqref{sthetaBC}. Let $\Psi_A^\bbC(x,\lambda)$ denote the fundamental matrix solution to \eqref{CC} such that $\Psi_A^\bbC(-L,\lambda)=I_n$, so that $\by(x)=\Psi_A^\bbC(x,\lambda)\by(-L)$, and let $M_A^\bbC(s,\lambda)=\Psi_A^\bbC(s,\lambda)\Psi_A^\bbC(-s,\lambda)^{-1}$ denote the propagator such that $\by(s)=M_A^\bbC(s,\lambda)\by(-s)$. Due to \eqref{sthetaBC}, we have $e^{i\theta}\in\Sp\big(M_A^\bbC(s,\lambda)\big)$. Let $T:\bbC^{2n}\to\bbR^{4n}$ be the map $\by\mapsto\bp=(p,q)^\top$ defined in \eqref{Defpq}. Then $\Psi_A(x,\lambda)=T\Psi_A^\bbC(x,\lambda)T^{-1}$ and $M_A(s,\lambda)=TM_A^\bbC(s,\lambda)T^{-1}$, yielding $(ii)$.
 
 $(ii) \Rightarrow (iii)$\, For a vector $\bv\in\bbC^{2n}$ satisfying $M_A^\bbC(s,\lambda)\bv=e^{i\theta}\bv$ let $\by(x)=\Psi_A^\bbC(x,\lambda)\bv$ be the solution of \eqref{CC} satisfying \eqref{sthetaBC}.
 Using \eqref{Defpq}, construct the solution $\bp$ of \eqref{eq:hillp} satisfying \eqref{eq3.28}, that is, satisfying $\bp(s)=(I_{2n}\otimes U(\theta))\bp(-s)$. Pick the solution $\bw$ of \eqref{eq:rotshort} such that $\bw(-s)=\bp(-s)$. Since solutions of \eqref{eq:rotshort} automatically satisfy \eqref{eq:wpereigen}, we have $\bp(\pm s)=\bw(\pm s)$ and thus $\Phi_s^\lambda(Y_{s,\lambda})\in\train(X\times X)$.
 
 $(iii) \Rightarrow (i)$\, Pick a solution $(\bp,\bw)^\top$ of \eqref{eq:augodeshort} such that $\Phi_s^\lambda\big((\bp,\bw)^\top\big)\in X\times X$; then $\bp(\pm s)=\bw(\pm s)$. Since $\bw$ automatically satisfies \eqref{eq:wpereigen}, the boundary condition $\bp(s)=(I_{2n}\otimes U(\theta))\bp(-s)$ holds. It follows that the solution  $\by$ of \eqref{CC} related to $\bp=(p,q)^\top$ via \eqref{Defpq} satisfies
 the boundary condition \eqref{sthetaBC}, thus yielding $H_{\theta,s}y=\lambda y$.
 
 To prove the equality of the multiplicity and the dimension of the intersection, we remark that the map $y\mapsto (\bp(-s),\bw(-s),\bp(s),\bw(s))^\top\in\bbR^{16n}$ from the finite dimensional (complex) space
 \begin{equation}\lb{defLthetaker}\begin{split}
 \ker(H_{\theta,s}-\lambda I_{L^2([-s,s])})=\Big\{
y&\in L^2([-s,s])\Big|
 y,y'\in AC_{\rm loc}([-s,s]),\\ & \text{ and \eqref{eq:hill},\,\eqref{sthetaBC} hold}\Big\}
\end{split}\end{equation}
into the finite dimensional space $\Phi_s^\lambda(Y_{s,\lambda})\cap(X\times X)$ has zero kernel and is an isomorphism. Thus $2\dim_\bbC\ker(H_{\theta,s}-\lambda I_{L^2([-s,s])})=\dim_\bbR\big(\Phi_s^\lambda(Y_{s,\lambda})\cap(X\times X)\big)$.
\end{proof}

 Since the boundary value problem on the segment $[-s,s]$ makes sense only for positive $s$, we may restrict $s$ to $s\in[s_0,L]$ for some $s_0>0$.  Since the operator $H_{\theta,s}$ is semibounded from above, for a $\lambda_\infty$ large enough there are no $(\theta,s)$-eigenvalues with  $\lambda\ge\lambda_\infty$. Therefore, we may restrict $\lambda$ to $\lambda\in[0,\lambda_\infty]$.
  As we will see in Lemma \ref{lem:llss}, for $\lambda_\infty$ large enough there are no $s\in[s_0,L]$ such that $\Phi^{\lambda_\infty}_{s}(Y_{s,\lambda}) \in \train(X \times X)$ provided $\theta\in[0,2\pi]$, and for $s_0$ small enough there are no $\lambda\in[0,\lambda_\infty]$ such that $\Phi^{\lambda}_{s_0}(Y_{s_0,\lambda}) \in \train(X \times X)$ provided $\theta\in(0,2\pi)$. 
   
With no loss of generality (by varying $\theta$ a little, if needed),  we may assume that $\lambda=0$ is {\em not} a $\theta$-eigenvalue for a given $\theta$, see Figures \ref{F1} and  \ref{F2}. (This ensures that all crossings take place away from the upper left corner in Figure \ref{F2}). This is not actually necessary, but more of a convenience. We can simply use the crossing form calculation at the endpoints if there is a crossing at the upper left corner, taking into account half of the local Maslov index each time. 
 
We also remark that for a fixed $\lambda_\infty $, and $s_0$, we can view the map $\Phi^{\lambda}_s$ as a continuous map from the square $\Phi^{\lambda}_s: [0,\lambda_\infty] \times [s_0,L] \to \Lambda(8n)$ to the space of Lagrangian planes, see Figure \ref{F2}. 
 As such, its image must be homotopic to a point, and so we have the following theorem.

\begin{theorem} \label{th:mas1}The homotopy class of the image of the boundary of the square $[0,\lambda_\infty] \times [s_0,L] $ under the map $\Phi$ is zero in $\pi_1(\Lambda(8n))$. 
\end{theorem}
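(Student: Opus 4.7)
The plan is to observe that the boundary loop in the theorem extends to a continuous map defined on the entire (contractible) square, from which nullhomotopy is automatic. Thus the whole content of the statement is a continuity check plus a one-line topological remark.

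First, I would establish that $(\lambda,s)\mapsto \Phi_s^\lambda(Y_{s,\lambda})$ is a continuous map from the rectangle $R:=[0,\lambda_\infty]\times[s_0,L]$ into the Grassmannian of $8n$-planes in $\R^{16n}$. By Theorem \ref{th:lagrange}, the image at every $(\lambda,s)\in R$ lies in $\Lambda(8n)$, so it is enough to check continuity at the Grassmannian level. Using the explicit realization \eqref{Defpsihat}, the plane $\Phi_s^\lambda(Y_{s,\lambda})$ is the column span of the $(16n\times 8n)$ matrix $\widehat{\Phi}_s^\lambda$. Standard continuous dependence of fundamental solutions of linear ODEs on parameters gives continuity of $(\lambda,s)\mapsto \Psi_A(\pm s,\lambda)$, and the explicit form \eqref{defPsiB} makes $s\mapsto\Psi_B(\pm s,s)$ manifestly continuous. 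Moreover, $\widehat{\Phi}_s^\lambda$ has full column rank $8n$ for every $(\lambda,s)\in R$ because its two diagonal blocks $\Psi_A(-s,\lambda)$ and $\Psi_B(-s,s)$ are invertible (being values of fundamental matrices). Since column span is a continuous operation on full-rank matrices, the map $(\lambda,s)\mapsto \Phi_s^\lambda(Y_{s,\lambda})$ is continuous on $R$ with values in $\Lambda(8n)$.

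Second, I would finish with the standard topological observation. The rectangle $R$ is contractible, and $\partial R$ (traversed as in Figure \ref{F1}) parametrizes the loop $\Gamma=\Gamma_1\cup\Gamma_2\cup\Gamma_3\cup\Gamma_4$. If $F\colon R\to \Lambda(8n)$ denotes the extension just constructed, then an explicit nullhomotopy of $F|_{\partial R}$ in $\Lambda(8n)$ is given by composing $F$ with any deformation retraction of $R$ onto an interior point. Hence $[F|_{\partial R}]=0$ in $\pi_1(\Lambda(8n))$, as asserted.

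The only genuine verification is the continuity statement on the full square; the rest is formal, the essential topological input being contractibility of $R$. No delicate argument about conjugate points, eigenvalue multiplicities, or monotonicity enters here — those issues will resurface only when the Maslov index of the individual arcs $\Gamma_j$ is computed in subsequent sections.
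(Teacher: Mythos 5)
Your argument is correct and is essentially the paper's own: the paper likewise views $\Phi_s^\lambda$ as a continuous map from the entire square $[0,\lambda_\infty]\times[s_0,L]$ into $\Lambda(8n)$ and concludes from contractibility of the square that the boundary loop is nullhomotopic. Your explicit continuity check via $\widehat{\Phi}_s^\lambda$ and continuous parameter dependence of the fundamental matrices merely spells out a detail the paper leaves implicit.
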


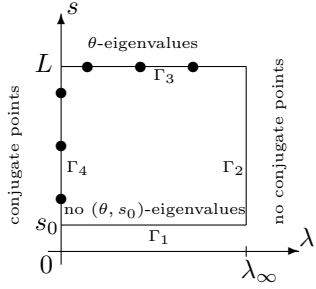
\begin{figure}
\begin{picture}(100,100)(-20,0)
\put(2,2){0}
\put(10,5){\vector(0,1){95}}
\put(5,10){\vector(1,0){95}}
\put(71,40){\text{\tiny $\Gamma_2$}}
\put(12,40){\text{\tiny $\Gamma_4$}}
\put(45,75){\text{\tiny $\Gamma_3$}}
\put(43,14){\text{\tiny $\Gamma_1$}}
\put(100,12){$\lambda$}
\put(12,100){$s$}
\put(80,20){\line(0,1){60}}
\put(80,8){\line(0,1){4}}
\put(78,0){$\lambda_\infty$}
\put(10,20){\line(1,0){70}}
\put(10,80){\line(1,0){70}}
\put(0,78){$L$}
\put(0,18){$s_0$}
\put(10,30){\circle*{4}}
\put(10,50){\circle*{4}}
\put(10,70){\circle*{4}}
\put(20,80){\circle*{4}}
\put(40,80){\circle*{4}}
\put(60,80){\circle*{4}}
\put(20,87){{\tiny \text{$\theta$-eigenvalues}}}
\put(11,24){{\tiny \text{no $(\theta,s_0)$-eigenvalues}}}
\put(-10,20){\rotatebox{90}{{\tiny conjugate points}}}
\put(90,20){\rotatebox{90}{{\tiny no conjugate points}}}
\end{picture}
\caption{ $\lambda=0$ is {\em not} a $\theta$-eigenvalue, $\theta\in(0,2\pi)$, and $s_0$ is small enough}\label{F2}
\end{figure}

It is well known that $\pi_1(\Lambda(8n)) \approx \Z$, and that the class of a closed curve can be determined by the number of intersections of such a curve (up to homotopy) with the train of a fixed Lagrangian plane (see for example, \cite{arnold67}, or \cite{rs93} and the references therein). Denote by $\Gamma$ the boundary of the image of $[0,\lambda_\infty] \times [s_0,L]$ under $\Phi^{\lambda}_s$. The key idea here is that under the construction given above, we have an eigenvalue interpretation for the intersection of $\Phi^\lambda_s(Y_{s,\lambda})$ with the train of a special plane. Since the signed number of intersections does not change under homotopy, and $\Gamma$ is homotopic to a point, we have the following result. 

\begin{corollary}\label{cor3.9} As we travel along $\Gamma$, the signed number of intersections of $\Gamma$ with $\train(X\times X)$, counted with multiplicity, is equal to zero. 
\end{corollary}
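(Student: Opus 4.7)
The plan is to deduce the corollary directly from Theorem \ref{th:mas1} together with the homotopy invariance of the Maslov index. By Theorem \ref{th:mas1}, the closed curve $\Gamma$ obtained as the image of the boundary of the rectangle $[0,\lambda_\infty]\times[s_0,L]$ under $\Phi_s^\lambda$ is null-homotopic in $\Lambda(8n)$. Since $\Gamma$ is contractible, I would choose a base Lagrangian $V_0$ that is transverse to every plane along a point-homotopy of $\Gamma$ (for instance, $V_0$ itself in the case of the constant curve), so that the limiting constant curve has no crossings with $\train(V_0)$ and hence Maslov index zero.

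The central step is then to appeal to the homotopy property (item (\ref{it:hom}) of the theorem summarizing the Maslov index) to conclude that $\mi(\Gamma, X\times X)$ equals the Maslov index of the constant curve, namely zero. To apply item (\ref{it:hom}) to a closed loop rather than a path with fixed endpoints, I would either (i) view $\Gamma$ as a path from a basepoint to itself and note that null-homotopy provides a path-homotopy to the constant path at that basepoint, or (ii) invoke the standard fact that the Maslov index descends to an isomorphism $\pi_1(\Lambda(8n))\to \mathbb{Z}$, given precisely by signed intersection number with $\train(V)$ for any fixed Lagrangian $V$; this is cited in the excerpt via the reference to \cite{arnold67, rs93}. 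Applying this with $V=X\times X$ finishes the job.

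The only point that requires care is the definition of signed intersection number when crossings are not simple, i.e., when $\Phi_s^\lambda(Y_{s,\lambda})$ meets $\train(X\times X)$ in a $\cT_k(X\times X)$ with $k>1$ or when crossings are non-regular. I would handle this by noting that the definition \eqref{DefMI} of $\mi$ via crossing forms accounts for multiplicity through signatures, and that by a small perturbation of $\Gamma$ within its homotopy class one may assume all crossings are regular (see Remark \ref{rem:generic}); the catenation and homotopy properties then ensure the total count is unchanged. Thus the main (and only) obstacle, namely reconciling ``signed number of intersections counted with multiplicity'' with the crossing-form definition used throughout the paper, is resolved by the standard regularization argument, after which the null-homotopy of $\Gamma$ immediately yields $\mi(\Gamma, X\times X)=0$.
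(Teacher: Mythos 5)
Your proposal is correct and follows essentially the same route as the paper: the paper also combines Theorem \ref{th:mas1} (null-homotopy of $\Gamma$) with the fact that the Maslov index realizes the isomorphism $\pi_1(\Lambda(8n))\cong\bbZ$ given by signed intersection number with a train, citing \cite{arnold67,rs93}. Your extra care about regularizing non-simple or non-regular crossings via a small perturbation within the homotopy class is a sound supplement that the paper also invokes (via the theorem in \cite{rs93}) in the subsequent $\theta=0$ discussion.
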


\begin{remark}\label{remN3.9}
It is convenient for us to break up the curve $\Gamma=\Gamma_{(\theta,s_0)}$ into the four pieces corresponding to the sides of the square from which it comes. Let $\Gamma_1=\Gamma_{1,(\theta,s_0)}$ denote $\big\{\Phi_{s_0}^{\lambda}(Y_{s_0,\lambda})\big|\, \lambda\in[0, \lambda_\infty]\big\}$, let $\Gamma_2=\Gamma_{2,(\theta,s_0)}$ denote $\big\{\Phi^{\lambda_\infty}_{s}(Y_{s,\lambda_\infty})\big|\, s\in[s_0,L]\big\}$, let $\Gamma_3=\Gamma_{3,(\theta,s_0)}$ denote $\big\{\Phi_L^{\lambda}(Y_{L,\lambda})\big|\, \lambda\in[\lambda_\infty,0]\big\}$, and let $\Gamma_4=\Gamma_{4, (\theta,s_0)}$ denote $\big\{\Phi^{0}_{s}(Y_{s,0})\big|\, s\in[L, s_0]\big\}$. 
\end{remark}

Let $A_i=A_{i,(\theta,s_0)}$ denote the Maslov index of each piece of $\Gamma_i$, as defined in \eqref{DefMI}, that is, 
\begin{equation}\label{defAi}
 A_i := \mi(\Gamma_i,X\times X).
 \end{equation}
We will also denote by $B_i=B_{i,(\theta,s_0)}$ the following expression: 
\begin{equation}\label{defBi}\begin{split}
 B_i &:=   \left| \frac{1}{2} \sgn Q_\cM(\Gamma_i(a_i),X\times X) \right|  
 \\ & + \sum_{t \in (a_i,b_i)} \left| \sgn Q_\cM(\Gamma_i(t),X\times X) \right|  + \left| \frac{1}{2} \sgn Q_\cM(\Gamma_i(b_i),X\times X) \right|,
 \end{split}\end{equation}
 where $\Gamma_i(a_i)$ and $\Gamma_i(b_i)$ denote the endpoints of the curve $\Gamma_i$.
That is, $B_i$ is the number of crossings along $\Gamma_i$ each counted regardless of sign, but taking into account the multiplicity of crossings. 
For instance, if we have three simple crossings on $\Gamma_i$ with signs $+,-,+$, we would have that $A_i = 1$, while $B_i = 3$. It is also worth noting that in all cases $|A_i |\leq B_i$.

We will now show that $B_2=0$ 
provided $\lambda_\infty$ is large enough and $\theta\in[0,2\pi]$, and that $B_1=0$ provided
$s_0>0$ is small enough and $\theta\in(0,2\pi)$. For $\theta=0$, see a computation of $B_1$ in Lemma \ref{llssnew}. 
We will repeatedly use the following elementary fact.
\begin{theorem}\label{KatoTh}{\bf\cite[Theorem V.4.10]{Kato}}
Let $\cH$ be selfadjoint and $\cV\in\cB(\cX)$ be symmetric operators   on a Hilbert space $\cX$. Then $$\dist\big(\Sp(\cH+\cV),\Sp(\cH)\big)\le\|\cV\|_{\cB(\cX)}.$$
\end{theorem}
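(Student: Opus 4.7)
The plan is to prove the two-sided Hausdorff bound $\dist(\Sp(\cH+\cV),\Sp(\cH))\le\|\cV\|$ by reducing each side of the inequality to a resolvent estimate that is available for selfadjoint operators. By the symmetric role played by $\cH$ and $\cH+\cV$ (the latter is also selfadjoint since $\cV$ is bounded and symmetric), it suffices to establish that $\dist(\mu,\Sp(\cH))\le\|\cV\|_{\cB(\cX)}$ for every $\mu\in\Sp(\cH+\cV)$; the reverse inclusion follows by swapping $\cH$ with $\cH+\cV$ and $\cV$ with $-\cV$.

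First I would argue by contradiction: fix $\mu\in\Sp(\cH+\cV)$ and assume that $\dist(\mu,\Sp(\cH))>\|\cV\|_{\cB(\cX)}$. Then $\mu$ lies in the resolvent set of $\cH$, and the sharp resolvent identity for selfadjoint operators gives
\begin{equation*}
\|(\cH-\mu I)^{-1}\|_{\cB(\cX)}=\frac{1}{\dist(\mu,\Sp(\cH))}<\frac{1}{\|\cV\|_{\cB(\cX)}}.
\end{equation*}
This identity is the one place where selfadjointness of $\cH$ is genuinely used; it comes from the spectral theorem, writing $(\cH-\mu I)^{-1}=\int(\lambda-\mu)^{-1}\,dE(\lambda)$, and reading off the operator norm from the essential supremum of $|\lambda-\mu|^{-1}$ on the support of the spectral measure $E$.

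Next I would factor the perturbed operator as
\begin{equation*}
\cH+\cV-\mu I=(\cH-\mu I)\bigl(I+(\cH-\mu I)^{-1}\cV\bigr)
\end{equation*}
on $\dom(\cH)=\dom(\cH+\cV)$. By the estimate above,
\begin{equation*}
\|(\cH-\mu I)^{-1}\cV\|_{\cB(\cX)}\le\|(\cH-\mu I)^{-1}\|_{\cB(\cX)}\,\|\cV\|_{\cB(\cX)}<1,
\end{equation*}
so a standard Neumann-series argument shows that $I+(\cH-\mu I)^{-1}\cV$ is boundedly invertible on $\cX$. Consequently $\cH+\cV-\mu I$ has a bounded inverse, contradicting $\mu\in\Sp(\cH+\cV)$. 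This forces $\dist(\mu,\Sp(\cH))\le\|\cV\|_{\cB(\cX)}$, completing one direction, after which the symmetric argument yields the Hausdorff bound.

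The main technical obstacle, and really the only nontrivial input, is the precise resolvent identity $\|(\cH-\mu I)^{-1}\|=1/\dist(\mu,\Sp(\cH))$ for selfadjoint $\cH$; without selfadjointness only the weaker bound $\|(\cH-\mu I)^{-1}\|\ge 1/\dist(\mu,\Sp(\cH))$ holds, and the proof would collapse. Once one invokes the spectral theorem to secure the equality, the remainder is a routine Neumann-series/perturbation argument.
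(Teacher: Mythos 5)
The paper does not supply a proof of this statement; it cites it directly from Kato's book (\cite[Theorem V.4.10]{Kato}). Your argument is the standard one (and essentially Kato's): the identity $\|(\cH-\mu I)^{-1}\|=1/\dist(\mu,\Sp(\cH))$ for selfadjoint $\cH$, a factorization plus Neumann series to show $\mu\in\rho(\cH+\cV)$ whenever $\dist(\mu,\Sp(\cH))>\|\cV\|$, and symmetry in $\cH\leftrightarrow\cH+\cV$ (with $\cV\mapsto-\cV$) for the two-sided bound. It is correct; the only cosmetic point is to note that the trivial case $\cV=0$ should be set aside before dividing by $\|\cV\|$.
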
 
\noindent We recall that $V$ is a bounded matrix valued  function on $[-L,L]$
and denote the supremum of its matrix norm by 
$\|V\|_\infty=\sup_{x\in[-L,L]}\|V(x)\|_{\bbR^n\times\bbR^n}$.
\begin{lemma}\label{lem:llss} \mbox{}
\begin{itemize}
\item[(i)] \label{llsshyp1} Assume that $\theta\in[0,2\pi]$. If $\lambda_\infty>\|V\|_\infty$ then $B_2=0$. 
\item[(ii)] \label{llsshyp2} Assume that $\theta\in(0,2\pi)$. If $\lambda_\infty>\|V\|_\infty$ and  $$s_0<\frac12{\min\big\{\theta, 2\pi-\theta\big\}}\,{(\|V\|_\infty+\lambda_\infty)^{-1/2}},$$ then $B_1=0$.
\end{itemize}
\end{lemma}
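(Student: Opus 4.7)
The plan is to reinterpret both statements via Proposition~\ref{cor:mult}: a point $s\in(0,L]$ is a $\lambda$-conjugate point (the only way a crossing along $\Gamma_1$ or $\Gamma_2$ can occur) if and only if $\lambda\in\Sp(H_{\theta,s})$. So $B_2=0$ reduces to showing that $\lambda_\infty\notin\Sp(H_{\theta,s})$ for every $s\in[s_0,L]$ and every $\theta\in[0,2\pi]$, while $B_1=0$ reduces to $[0,\lambda_\infty]\cap\Sp(H_{\theta,s_0})=\emptyset$ for $\theta\in(0,2\pi)$. Both follow from a perturbative comparison with the ``free'' operator $H^0_{\theta,s}:=d^2/dx^2$ acting on $L^2([-s,s])$ with domain as in \eqref{defLthetaS} (i.e.\ with the same $\theta$-boundary conditions but $V\equiv 0$), using Theorem~\ref{KatoTh}.

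First I would diagonalize $H^0_{\theta,s}$, which is selfadjoint. Looking for eigenfunctions of the form $e^{i\mu x}v$ with constant $v\in\bbC^n$, the boundary condition $e^{2i\mu s}=e^{i\theta}$ forces $\mu=(\theta+2\pi k)/(2s)$, $k\in\bbZ$, so
$$\Sp(H^0_{\theta,s})=\Big\{-\big((\theta+2\pi k)/(2s)\big)^2:k\in\bbZ\Big\}\subset(-\infty,0].$$
For $\theta\in(0,2\pi)$ the minimum of $|\theta+2\pi k|$ is $\min\{\theta,2\pi-\theta\}>0$, so the largest eigenvalue is $-\big(\min\{\theta,2\pi-\theta\}/(2s)\big)^2<0$; for $\theta\in\{0,2\pi\}$ the top of the spectrum is just $0$.

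Next I would apply Theorem~\ref{KatoTh} with $\cH=H^0_{\theta,s}$ and $\cV$ the bounded symmetric multiplication operator by $V(\cdot)$, whose operator norm on $L^2([-s,s])$ is at most $\|V\|_\infty$. This gives
$$\Sp(H_{\theta,s})\subset\Sp(H^0_{\theta,s})+[-\|V\|_\infty,\|V\|_\infty]\subset(-\infty,\|V\|_\infty],$$
which immediately settles (i), since $\lambda_\infty>\|V\|_\infty$. For (ii), at $s=s_0$ and $\theta\in(0,2\pi)$ the same bound yields
$$\Sp(H_{\theta,s_0})\subset\Big(-\infty,\,-\big(\min\{\theta,2\pi-\theta\}/(2s_0)\big)^2+\|V\|_\infty\Big],$$
and the hypothesis on $s_0$ rearranges exactly to $\big(\min\{\theta,2\pi-\theta\}/(2s_0)\big)^2>\|V\|_\infty+\lambda_\infty$, placing the right endpoint strictly below $-\lambda_\infty\le 0$. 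Hence no $\lambda\in[0,\lambda_\infty]$ can be a $(\theta,s_0)$-eigenvalue, so $B_1=0$.

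There is no serious obstacle here: the matrix-valued character of $V$ is absorbed by the scalar bound $\|V(x)\|\le\|V\|_\infty$ in Kato's estimate, and the rest is a textbook Fourier computation on $[-s,s]$. The only point worth verifying carefully is that $H^0_{\theta,s}$ with the $e^{i\theta}$-boundary conditions is genuinely selfadjoint with purely discrete spectrum of the claimed form; this is standard and follows, e.g., from expansion in the complete orthonormal system $\{(2s)^{-1/2}e^{i(\theta+2\pi k)x/(2s)}\}_{k\in\bbZ}$ tensored with an orthonormal basis of $\bbC^n$.
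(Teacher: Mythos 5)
Your proposal is correct. Part (i) is essentially word-for-word the paper's argument: explicit diagonalization of the free operator $H^{(0)}_{\theta,s}$ on $L^2([-s,s])$, location of its spectrum in $(-\infty,0]$, and a Kato-type perturbation bound giving $\Sp(H_{\theta,s})\subset(-\infty,\|V\|_\infty]$.

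For part (ii) you take a genuinely more direct route than the paper. The paper first rescales the eigenvalue problem from $[-s,s]$ to the fixed interval $[-L,L]$, producing an auxiliary operator $H^{(1)}_{\theta,L}$ with the $s$- and $\lambda$-dependence pushed into a small potential $(s/L)^2 V(s\cdot/L)-\lambda(s/L)^2$, and then applies Kato's bound there against $H^{(0)}_{\theta,L}$. You instead apply Kato directly on $L^2([-s_0,s_0])$: the top of $\Sp(H^{(0)}_{\theta,s_0})$ is $-\big(\min\{\theta,2\pi-\theta\}/(2s_0)\big)^2$, and perturbing by $\|V\|_\infty$ keeps the spectrum negative under the stated hypothesis. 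Both arrive at the identical algebraic condition $s_0^2(\|V\|_\infty+\lambda_\infty)<\big(\min\{\theta,2\pi-\theta\}/2\big)^2$. Your version is shorter and avoids introducing the family $H^{(1)}_{\theta,L}$ (the rescaling step is more useful elsewhere in the paper, e.g.\ in Lemma~\ref{llssnew}, where a fixed Hilbert space is needed to compare Riesz projections as $s\to0$). A side observation: your estimate shows the weaker hypothesis $\big(\min\{\theta,2\pi-\theta\}/(2s_0)\big)^2>\|V\|_\infty$ already suffices to make $\Sp(H_{\theta,s_0})$ negative, so the stated bound on $s_0$ is not optimal; but since the lemma is used together with (i), having $\lambda_\infty$ in the bound is harmless.
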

\begin{proof}
Let $H_{\theta,s}^{(0)}=\frac{d^2}{dx^2}$ with $\dom(H_{\theta,s}^{(0)})=\dom(H_{\theta,s})$.
The eigenvalues of $H_{\theta,s}^{(0)}$ are given by the formula
\begin{equation} \label{formmu}
\mu_k=-\Big(\frac{\theta+2\pi k}{2s}\Big)^2, \, \, k\in\bbZ.
\end{equation}
Indeed, inserting  the general solution $y(x)=c_1e^{\sqrt{\mu}x}+c_2e^{-\sqrt{\mu}x}$ of the equation $y''=\mu y$ in the boundary conditions \eqref{sthetaBC}, we obtain the system of equations for $c_1$, $c_2$, 
whose determinant must be equal to zero, yielding \eqref{formmu}. 

For $s\in(0,L]$ and $\mu_k$ in \eqref{formmu} we denote $\mu(s)=\max_{k\in\bbZ}\mu_k$. Then \eqref{formmu} implies 
\begin{equation}
\mu(s)=-\big(\min\big\{\theta,2\pi-\theta\big\}/(2s)\big)^2, 
\quad \theta\in[0,2\pi],\,s\in(0,L],
\end{equation}
and 
\begin{equation}\label{bbsp}
\Sp(H_{\theta,s}^{(0)})\subset\big(-\infty,\mu(s)\big]\subset(-\infty,0], \,\text{ for each } s\in(0,L].
\end{equation}
By Theorem \ref{KatoTh} we infer:
\begin{equation}\label{KatTh}
\dist\big(\Sp(H_{\theta,s}),\, \Sp(H_{\theta,s}^{(0)})\big)\le\|V\|_{\cB(L^2([-s,s]))}\le\|V\|_\infty.
\end{equation}
 This and the second inclusion in \eqref{bbsp}
yield  $\Sp(H_{\theta,s})\subset(-\infty, \|V\|_\infty]$. If $s$ is a conjugation 
point for a given $\lambda$, then there is a solution $y$ of the equation 
$H_{\theta,s}y=\lambda y$ satisfying \eqref{sthetaBC}, that is, $\lambda$ is an eigenvalue of $H_{\theta,s}$. Thus, there are no conjugation points for $\lambda_\infty$ provided $\lambda_\infty>\|V\|_\infty$, proving assertion $(i)$. 

$(ii)$\,\, Assume that $\theta\in(0,2\pi)$, fix $\lambda_\infty>\|V\|_\infty$, and consider any $\lambda\in[0,\lambda_\infty]$ and $s\in(0,L]$. If $y$ is a solution of the equation $y''+V(x)y=\lambda y$ for $|x|\le s$ satisfying  boundary conditions \eqref{sthetaBC}  then $z(x)=y(sx/L)$ for $|x|\le L$ satisfies the equation 
\begin{equation}
H_{\theta,L}^{(1)}z:=z''+\Big(\big(s/L\big)^2V(sx/L)-\lambda(s/L)^2\Big)z=0,\, x\in[-L,L],
\end{equation}
and boundary conditions \eqref{thetaBC}. In other words, $\lambda$ is an eigenvalue of $H_{\theta,s}$ on $L^2([-s,s])$ if and only if zero is an eigenvalue of $H_{\theta,L}^{(1)}$ on $L^2([-L,L])$.
Since the potential in $H_{\theta,L}^{(1)}$ is $\big(s/L\big)^2V(s(\cdot)/L)-\lambda(s/L)^2$, by Theorem \ref{KatoTh} we infer:
\begin{align}
\dist\big(\Sp(H_{\theta,L}^{(1)}),\, \Sp(H_{\theta,L}^{(0)})\big)&\le\|\big(s/L\big)^2V(s(\cdot)/L)-\lambda(s/L)^2\|_{\cB(L^2([-L,L]))}\nonumber\\
&\le\big(s/L\big)^2\big(\|V\|_\infty+\lambda_\infty\big).\label{KatTh1}
\end{align}
This and the first inclusion in \eqref{bbsp} with $s=L$ imply 
\begin{equation}
\Sp(H_{\theta,L}^{(1)})\subset\Big(-\infty,\,
\big({s}/{L}\big)^2\big(\|V\|_\infty+\lambda_\infty\big)+\mu(L)\Big],
\end{equation}
where $\mu(L)<0$ due to $\theta\in(0,2\pi)$.
In particular, using \eqref{formmu}, if $$s_0^2\big(\|V\|_\infty+\lambda_\infty\big)<(\min\big\{\theta, 2\pi-\theta\big\}/2)^2$$
then zero is not an eigenvalue of $H_{\theta,L}^{(1)}$ and thus $\lambda$ is not an eigenvalue of $H_{\theta,s_0}$ on $L^2([-s_0,s_0])$, as needed in $(ii)$.
\end{proof}

Alternatively, one can prove that for $\theta\in(0,2\pi)$ there are no conjugate points, provided $s$ is sufficiently small, using Proposition \ref{cor:mult} $(ii)$: Since $\Sp\big(M_A(s,\lambda)\big)\to\{1\}$ as $s\to0$, we infer that $e^{i\theta}\notin \Sp\big(M_A(s,\lambda)\big)$ for 
$s$ small enough.

The periodic case $\theta=0$ or $\theta=2\pi$ is somehow special and should be treated separately. Since the periodic boundary conditions \eqref{thetaBC}, \eqref{sthetaBC} hold when either $\theta=0$ or $\theta=2\pi$, we will conclude this section by considering the case $\theta=0$ (see also Lemma \ref{llssnew} for more information regarding this case). 

We will begin by constructing the curve $\Gamma=\Gamma_{(0,0)}$ for $\theta=0$ and $s_0=0$ (note that the construction described in Remark \ref{remN3.9} does not work as \eqref{eq:rotshort} is not defined for $s=0$). If $\theta=0$ then $\mathfrak{u}(s,0)=0_2$ in \eqref{eq:rot}  and $B(s,0)=0_{4n}$ in \eqref{eq:rotshort} for all $s>0$.
Thus, we have $\Psi_B(x,s)=I_{4n}$ for $\theta=0$ and $s>0$. Letting
$\Psi_B(x,0)=I_{4n}$ for $s=0$ and all $x\in[-L,L]$, we can extend $\Psi_B(x,s)$ continuously from $s>0$ to $s=0$ although the differential equation \eqref{eq:rotshort} is not defined for $s=0$. This allows us to define the curve $\Gamma_1$ for $\theta=0$ and $s_0=0$ as follows: Recall that
the curve $\Gamma_1=\Gamma_{1,(\theta,s_0)}$ is defined via \eqref{Defpsihat} as the set
\begin{equation}\label{defG1c}
\Gamma_{1,(\theta,s_0)}=\big\{ \widehat{\Psi}_{s_0}^\lambda\bv\big|\,
\lambda\in[0,\lambda_\infty], \bv\in\bbR^{8n}\big\}.
\end{equation}
Setting $\theta=0$ and passing in \eqref{Defpsihat} to the limit yields $\widehat{\Psi}_{s_0}^\lambda\to\widehat{\Psi}_{0}^\lambda$ as $s_0\to0^+$ uniformly for $\lambda\in[0,\lambda_\infty]$, where we define
\begin{equation}\label{Defpsihat0}
\widehat{\Phi}_0^\lambda=\begin{pmatrix}\Psi_A(0,\lambda)&0_{4n}\\
0_{4n}&I_{4n}\\\Psi_A(0,\lambda)&0_{4n}\\
0_{4n}&I_{4n}\end{pmatrix}.
\end{equation}
Letting
\begin{equation}\label{defG1c0}
\Gamma_{1,(0,0)}=\big\{ \widehat{\Psi}_{0}^\lambda\bv\big|\,
\lambda\in[0,\lambda_\infty], \bv\in\bbR^{8n}\big\},
\end{equation}
we thus introduce the curve $\Gamma_1=\Gamma_{1,(0,0)}$ for $\theta=0$ and $s_0=0$.
This curve is homotopic to the curve $\Gamma_1=\Gamma_{1,(\theta,s_0)}$ for $\theta>0$ and $s_0>0$ although the endpoints of the two curves are not fixed. A direct computation shows that 
$\big(\widehat{\Psi}_{0}^\lambda\big)^\top \Omega \widehat{\Psi}_{0}^\lambda=0_{16n}$, and thus $\Gamma_{1,(0,0)}$ is a curve in $\Lambda(8n)$. Clearly, $\Gamma_{1,(0,0)}$ lies in $\train(X\times X)$, and thus is not regular. This makes the computation of $\mi(\Gamma_1, X\times X)$ for $\theta=0$ and $s_0=0$ with this choice of $\Gamma_1$ difficult.
By appending to $\Gamma_{1,(0,0)}$ the three remaining curves $\Gamma_{j,(0,0)}$, $j=2,3,4$, corresponding to the remaining three sides of the square $[0,L]\times[0,\lambda_\infty]$,
we construct the entire curve $\Gamma=\Gamma_{(0,0)}$ for $\theta=0$ and $s_0=0$ which is homotopic to the curve $\Gamma=\Gamma_{(\theta,s_0)}$ for $\theta>0$ and $s_0>0$. We can appeal to a theorem in \cite{rs93} which says that every continuous curve is homotopic to a curve with only regular crossings. Thus we can compute the Maslov index of $\Gamma=\Gamma_{(0,0)}$ and verify that it is indeed 0, that is, that Corollary \ref{cor3.9} holds for $\theta=0$ and $s_0=0$. 

One can also define conjugate point as a point $s$ where $e^{i\theta}\in\Sp\big(M_A(s,\lambda)\big)$, see Proposition \ref{cor:mult} $(ii)$. Unlike  Definition \ref{def3.5}, this latter definition is applicable for $s=0$ as well. But for $\theta=0$, since  $M_A(0,\lambda)=I_{4n}$, we have that $1=e^{i0}\in\Sp\big(M_A(0,\lambda)\big)$, and thus $s=0$ is the conjugate point for all $\lambda\in[0,\lambda_\infty]$. In particular, for $\theta=0$ the curve $\Gamma_2$ has a conjugate point at $s=0$.

We summarize the discussion as follows and refer to Lemma \ref{llssnew} for more information regarding the case $\theta=0$.
\begin{corollary}\label{cor:3.10} Assume that $\theta=0$ and that $\Gamma$ is the curve just defined for $s_0=0$ using \eqref{Defpsihat0}, and parametrized by the sides of the square $[0,L]\times[0,\lambda_\infty]$. Then $\mi(\Gamma, X\times X)=0$. Each point of the curve $\Gamma_1$ belongs to $\train(X\times X)$. The lower endpoints of the curves $\Gamma_2$ and $\Gamma_4$, and all points of $\Gamma_1$ are conjugate points in the sense that $1=e^{i0}\in\Sp\big(M_A(0,\lambda)\big)$ for  all $\lambda\in[0,\lambda_\infty]$. 
\end{corollary}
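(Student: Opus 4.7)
The plan is to verify the three claims in order. The first two are direct computations with \eqref{Defpsihat0}, while the third hinges on the homotopy invariance of the Maslov index together with the regularisation theorem of \cite{rs93}.

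To show $\Gamma_1\subset\train(X\times X)$, I would read off the image of $\widehat{\Phi}_0^\lambda$ in \eqref{Defpsihat0}: for arbitrary $v_1,v_2\in\R^{4n}$, writing $(p,q)^\top:=\Psi_A(0,\lambda)v_1$ and $(w,z)^\top:=v_2$, the image vector is $(p,q,w,z,p,q,w,z)^\top\in\R^{16n}$. Restricting further to $v_2=\Psi_A(0,\lambda)v_1$ produces a $4n$-dimensional subspace of $\Phi_0^\lambda(Y_{0,\lambda})\cap(X\times X)$, so the intersection is non-trivial for every $\lambda\in[0,\lambda_\infty]$ and every point of $\Gamma_1$ lies in $\train(X\times X)$. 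For the conjugate-point statement, the identity $M_A(0,\lambda)=\Psi_A(0,\lambda)\Psi_A(0,\lambda)^{-1}=I_{4n}$ gives $\Sp(M_A(0,\lambda))=\{1\}$, hence $e^{i0}=1$ is a spectral point of the propagator for every $\lambda\in[0,\lambda_\infty]$; this is precisely what occurs at the lower endpoints of $\Gamma_2$ and $\Gamma_4$ and along all of $\Gamma_1$.

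The more delicate claim is $\mi(\Gamma,X\times X)=0$, and the obstacle is that the whole of $\Gamma_1$ lies in $\train(X\times X)$, so that \eqref{DefMI} is not directly applicable. I would first invoke the theorem from \cite{rs93} asserting that any continuous Lagrangian path is homotopic (with fixed endpoints) to one with only regular crossings; applied to $\Gamma_{(0,0)}$ this produces a representative on which the Maslov index is computable. To identify the value, I would then construct an explicit homotopy from $\Gamma_{(0,0)}$ to $\Gamma_{(\theta,s_0)}$ with $\theta>0$ and $s_0>0$ small, built from the two-parameter family $\widehat{\Phi}_{s_0}^\lambda$: the uniform convergence $\widehat{\Phi}_{s_0}^\lambda\to\widehat{\Phi}_0^\lambda$ as $s_0\to0^+$ recorded just above \eqref{Defpsihat0}, together with the evident continuity in $(s,\theta)$ along $\Gamma_2,\Gamma_3,\Gamma_4$, supplies a continuous deformation through closed Lagrangian loops in $\Lambda(8n)$ (Lagrangianity at each parameter value is preserved by the argument of Theorem \ref{th:lagrange}). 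Since Corollary \ref{cor3.9} gives $\mi(\Gamma_{(\theta,s_0)},X\times X)=0$, the homotopy invariance of the Maslov index forces $\mi(\Gamma_{(0,0)},X\times X)=0$.

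The main obstacle, as indicated, is the bookkeeping around the degenerate arc $\Gamma_1$: one must verify both that the regularised representative is freely homotopic to $\Gamma_{(0,0)}$ within closed loops in $\Lambda(8n)$ and that the interpolation with $\Gamma_{(\theta,s_0)}$ stays inside $\Lambda(8n)$ rather than wandering into the ambient Grassmannian. Once these two continuity checks are in place, the conclusion is purely a matter of invoking homotopy invariance and Corollary \ref{cor3.9}.
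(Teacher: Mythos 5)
Your proof is correct and follows essentially the same route as the paper: spell out that $\widehat{\Phi}_0^\lambda$ sends $(v_1,v_2)^\top$ to $(\Psi_A(0,\lambda)v_1,v_2,\Psi_A(0,\lambda)v_1,v_2)^\top$ so that restricting to $v_2=\Psi_A(0,\lambda)v_1$ places $\Gamma_1$ inside $\train(X\times X)$, note $M_A(0,\lambda)=I_{4n}$ for the conjugate-point claim, and then obtain $\mi(\Gamma,X\times X)=0$ by homotoping $\Gamma_{(0,0)}$ to $\Gamma_{(\theta,s_0)}$ (a closed loop) and appealing to Corollary~\ref{cor3.9} together with the regularization theorem of~\cite{rs93}. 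This matches the paper's argument in the paragraphs immediately preceding the corollary.
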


\section{Monotonicity of the Maslov index}\label{mmi}

We will now establish monotonicity of the Maslov index with respect to the parameter $\lambda$ and, under some additional assumptions, with respect to the parameter $s$. 
Let us begin with $\lambda$. We recall from Remark \ref{remN3.9} that the curve $\Gamma_3$ is parametrized by the parameter $\lambda$ {\em decaying} from $\lambda_\infty$ to $0$ while the curve $\Gamma_1$ is parametrized by the parameter $\lambda$ {\em growing} from $0$ to $\lambda_\infty$. The strategy of the proof of the next result follows the proof of \cite[Lemma 4.7]{DJ11}.
\begin{lemma}\label{lem:cross}
For any $\theta\in[0,2\pi]$ and any fixed $s\in(0,L]$, each crossing $\lambda_0\in(0,\lambda_\infty)$ of the path $\big\{\Phi_s^\lambda(Y_{s,\lambda})\big\}_{\lambda=\lambda_0-\varepsilon}^{\lambda_0+\varepsilon}$, with $\varepsilon>0$ small enough, is negative.  In particular,  if $0\notin\Sp(H_\theta)$, then $B_3=A_3$
 and if $0\notin\Sp(H_{\theta,s_0})$ then $B_1=-A_1$.
 \end{lemma}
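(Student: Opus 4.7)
The plan is to compute the crossing form $Q_\cM(v, v)$ of the path $\lambda \mapsto \Phi_s^\lambda(Y_{s,\lambda})$ at an interior crossing $\lambda_0 \in (0, \lambda_\infty)$ with $s$ fixed, show that it is negative definite on $\Phi_s^{\lambda_0}(Y_{s,\lambda_0}) \cap (X \times X)$, and then translate this into the signs of the crossings along $\Gamma_1$ and $\Gamma_3$, taking into account that the two curves are parametrized in opposite $\lambda$-directions.

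My starting point will be the equivalent form $Q_\cM(v, v) = \omega(v, \dot v(\lambda_0))$, valid for any smooth lift $v(\lambda) \in \Phi_s^\lambda(Y_{s,\lambda})$ with $v(\lambda_0) = v$; independence of the lift follows because two lifts differ at $\lambda_0$ by a vector in the Lagrangian plane $\Phi_s^{\lambda_0}(Y_{s,\lambda_0})$, which pairs trivially with $v$ under $\omega$. The key observation is that the $\bw$-equation $\bw' = B(s,\theta)\bw$ has no $\lambda$-dependence, so I may take the lift with $\bw(\lambda) \equiv \bw$ constant and only $\bp(\lambda) = \Psi_A(\cdot,\lambda)\bp(-L)$ varying. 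Then $\dot v(\lambda_0) = (\hat\bp(-s), 0, \hat\bp(s), 0)^\top$ with $\hat\bp := \partial_\lambda \bp|_{\lambda_0}$, and differentiating \eqref{eq:hillp} in $\lambda$ yields $\hat\bp' = A\hat\bp + (\partial_\lambda A)\bp$ with $\partial_\lambda A = \begin{pmatrix} 0_{2n} & 0_{2n} \\ I_{2n} & 0_{2n} \end{pmatrix}$.

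The core computation is the Hamiltonian integration-by-parts identity $\frac{d}{dx}\langle \hat\bp, J_n \bp\rangle_{\bbR^{4n}} = \langle (\partial_\lambda A)\bp, J_n \bp\rangle_{\bbR^{4n}}$, in which the terms involving $A$ cancel because $J_n A$ is symmetric (the Hamiltonian symmetry already used in the proof of Theorem \ref{th:lagrange}). A direct block multiplication with $\bp = (p,q)^\top$ gives $\langle (\partial_\lambda A)\bp, J_n \bp\rangle_{\bbR^{4n}} = -\langle p, p\rangle_{\bbR^{2n}}$. Integrating from $-s$ to $s$ and using $J_n^\top = -J_n$ together with the definition of $\Omega$ in \eqref{eq:omega} (whose $\bw$-blocks contribute nothing since the $\bw$-components of $\dot v$ vanish), I obtain $Q_\cM(v, v) = \omega(v, \dot v) = -\int_{-s}^{s} \langle p(x), p(x)\rangle_{\bbR^{2n}}\, dx \leq 0$.

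For nondegeneracy, if $Q_\cM(v, v) = 0$ then $p \equiv 0$ on $[-s, s]$; the top block of \eqref{eq:hillsys} then gives $q = p' \equiv 0$, hence $\bp \equiv 0$; the constraint $v \in X \times X$ forces $\bw(\pm s) = 0$, and invertibility of the propagator $\Psi_B(\cdot, s)$ in \eqref{defPsiB} forces $\bw \equiv 0$, so $v = 0$. Thus $Q_\cM$ is negative definite on the intersection along the direction of increasing $\lambda$, with signature equal to $-\dim(\Phi_s^{\lambda_0}(Y_{s,\lambda_0}) \cap (X \times X))$. Since $\Gamma_1$ is parametrized by $\lambda$ increasing, each interior crossing contributes $-1$ times its multiplicity, and the hypothesis $0 \notin \Sp(H_{\theta,s_0})$ (together with Lemma \ref{lem:llss} at $\lambda_\infty$) removes endpoint contributions, giving $A_1 = -B_1$. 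Since $\Gamma_3$ is parametrized by $\lambda$ decreasing, the crossing-form sign reverses, every interior crossing becomes positive, and $0 \notin \Sp(H_\theta)$ removes the crossing at $\lambda = 0$, giving $A_3 = B_3$. The main technical obstacle will be the careful sign bookkeeping in the integration by parts and the translation between the intrinsic $\lambda$-derivative sign and the orientations of $\Gamma_1$ versus $\Gamma_3$.
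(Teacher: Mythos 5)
Your proposal is correct and follows essentially the same route as the paper: compute $\omega(v,\partial_\lambda v)$ at a crossing by differentiating \eqref{eq:hillp} in $\lambda$, integrate by parts using the Hamiltonian symmetry \eqref{symm} to get $-\int_{-s}^{s}\|p\|^2\,dx$, and then convert signs via the opposite orientations of $\Gamma_1$ and $\Gamma_3$. The only (harmless) variations are that you use a lift with $\bw$ frozen in $\lambda$ (invoking lift-independence of the crossing form) where the paper uses the graph lift and checks explicitly that the $\bw$-contribution vanishes, and that you spell out the definiteness argument ($p\equiv 0\Rightarrow v=0$) which the paper leaves implicit.
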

\begin{proof}
Let $\lambda_0\in(0,\lambda_\infty)$ be a crossing, so that $\Phi_s^{\lambda_0}(Y_{s,\lambda_0})\cap(X\times X)\neq\{0\}$. Let $V^\bot$ be a subspace in $\R^{16n}$ transversal to $\Phi_s^{\lambda_0}(Y_{s,\lambda_0})$.
Then $V^\bot$ is transversal to $\Phi_s^{\lambda}(Y_{s,\lambda})$ for all $\lambda\in[\lambda_0-\varepsilon,\lambda_0+\varepsilon]$ for $\varepsilon>0$ small enough. Thus, there exists a smooth family of matrices, $\phi(\lambda)$, for $\lambda\in[\lambda_0-\varepsilon,\lambda_0+\varepsilon]$, viewed as operators $\phi(\lambda): \Phi_s^{\lambda_0}(Y_{s,\lambda_0})\to V^\bot$, such that $\Phi_s^{\lambda}(Y_{s,\lambda})$ is the graph of $\phi(\lambda)$. Fix any nonzero ${\rm v}\in\Phi_s^{\lambda_0}(Y_{s,\lambda_0})\cap(X\times X)$ and consider the curve $v(\lambda)={\rm v}+\phi(\lambda){\rm v}\in\Phi_s^{\lambda}(Y_{s,\lambda})$ for $\lambda\in[\lambda_0-\varepsilon,\lambda_0+\varepsilon]$ with $v(\lambda_0)={\rm v}$. By the definition of $Y_{s,\lambda}$, there is a family of solutions
$(\bp(\cdot\,,\lambda),\bw(\cdot\,,\lambda))^\top$ of \eqref{eq:augodeshort} such that
$v(\lambda)=\Phi_s^{\lambda}\big((\bp(\cdot\,,\lambda),\bw(\cdot\,,\lambda))^\top\big)$. We claim that 
\begin{equation}\label{negdef}
\omega\big(v(\lambda_0),\,\frac{\partial v}{\partial\lambda}(\lambda_0)\big)<0.
\end{equation}
Assuming the claim, we finish the proof as follows: Since for each  nonzero ${\rm v}\in\Phi_s^{\lambda_0}(Y_{s,\lambda_0})\cap(X\times X)$ the crossing form $Q_\cM$ satisfies
\begin{align*}
Q_\cM({\rm v},{\rm v})&=\frac{d}{d\lambda}\Big|_{\lambda=\lambda_0}\omega({\rm v},\phi(\lambda){\rm v})=
\frac{d}{d\lambda}\Big|_{\lambda=\lambda_0}\omega({\rm v},{\rm v}+\phi(\lambda){\rm v})\\&=\omega\big(v(\lambda_0),\,\frac{\partial v}{\partial\lambda}(\lambda_0)\big)<0,
\end{align*}
the form is negative definite. Thus, the crossing $\lambda_0\in(0,\lambda_\infty)$ is negative. In particular, taking into account that the path $\Gamma_3=\big\{\Phi^\lambda_L(Y_{L,\lambda})\big\}_{\lambda=\lambda_\infty}^0$ is parametrized by the parameter $\lambda$ {\em decaying} from $\lambda_\infty$ to $0$, each crossing $\lambda_0$ along $\Gamma_3$ is positive. Thus, the Maslov index $A_3$ of the path $\Gamma_3$ is equal to $B_3$.
Taking into account the parametrization of $\Gamma_1=\big\{\Phi_{s_0}^\lambda\big\}_{\lambda=0}^{\lambda_\infty}$,
a similar argument yields $A_1=-B_1$.

Starting the proof of claim \eqref{negdef}, for the solution $\bp=\bp(x,\lambda)$ we compute the $\lambda$-derivative (for brevity, denoted below by dot) in equation \eqref{eq:hillp}, and obtain the equation
\begin{equation}\label{lder}
\dot{\bp}'(x)=A(x,\lambda)\dot{\bp}(x)+(\sigma_0\otimes I_{2n})\bp(x);
\end{equation}
here and below we abbreviate $\sigma_0=\begin{pmatrix}0&0\\1&0\end{pmatrix}$ and recall notations $J_n$ and $J^{(n)}$ in \eqref{eq:newJ} and formula \eqref{newOm}. Computing the scalar product in $\R^{4n}$ of both parts of \eqref{lder} with $J_n\bp$, integrating from $-s$ to $s$, and using the identities 
\begin{align*}
\int_{-s}^s\langle\dot\bp'(x),\, J_n\bp(x)\rangle_{\R^{4n}}\,dx&=
\langle\dot\bp,\, J_n\bp\rangle_{\R^{4n}}\Big|_{-s}^s-\int_{-s}^s\langle\dot\bp(x),\, J_n\bp'(x)\rangle_{\R^{4n}}\,dx\\
&\hskip1cm\text{(integration by parts),}\\
\int_{-s}^s\langle A(x,\lambda)\dot\bp(x),\, J_n\bp(x)\rangle_{\R^{4n}}\,dx&=-
\int_{-s}^s\langle \dot\bp(x),\, J_nA(x,\lambda)\bp(x)\rangle_{\R^{4n}}\,dx\\
&\hskip1cm\text{(formulas $J_n^\top=-J_n$ and \eqref{symm}),}\\
\int_{-s}^s\langle (\sigma_0\otimes I_{2n})\bp(x),\, J_n\bp(x)\rangle_{\R^{4n}}\,dx&=-\int_{-s}^s\langle \big((J\sigma_0)\otimes I_{2n}\big)\bp(x),\,\bp(x)\rangle_{\R^{4n}}\,dx\\&\hskip-1cm =-\int_{-s}^s\|p(x)\|^2_{\R^{2n}}\,dx\quad\text{ (because $\bp=(p,q)^\top$),}
\end{align*}
and $J_n\bp'=JA(x,\lambda)\bp$, we arrive at the equality
\begin{equation}\label{negdef1}\begin{split}
\langle\bp(-s),\,J_n\dot\bp(-s)\rangle_{\R^{4n}}&+
\langle\bp(s),\,(-J_n)\dot\bp(s)\rangle_{\R^{4n}}=\langle\dot\bp,\, J_n\bp\rangle_{\R^{4n}}\Big|_{-s}^s\\ & =- \int_{-s}^s\|p(x)\|^2_{\R^{2n}}\,dx.
\end{split}\end{equation}
A similar argument for $\bw=\bw(x,\lambda)$ yields
\begin{equation}\label{negdef2}
\langle\bw(-s),\,J^{(n)}\dot\bw(-s)\rangle_{\R^{4n}}+
\langle\bw(s),\,(-J^{(n)})\dot\bw(s)\rangle_{\R^{4n}}=0.
\end{equation}
Combining \eqref{negdef1}, \eqref{negdef2} with \eqref{newOm} and 
$$v(\lambda_0)=\Phi_s^{\lambda_0}\big((\bp(\cdot\,,\lambda_0),\bw(\cdot\,,\lambda_0))^\top\big)=\big(\bp(-s,\lambda_0),\bw(-s,\lambda_0),\bp(s,\lambda_0),\bw(s,\lambda_0)\big)^\top$$ we infer 
\begin{align*}
\omega\big(v(\lambda_0),\,\dot v(\lambda_0)\big)&=\big\langle v(\lambda_0),\,\Omega\dot v(\lambda_0)\big\rangle_{\R^{4n}}\\&=\langle\bp(-s),\,J_n\dot\bp(-s)\rangle_{\R^{4n}}+
\langle\bp(s),\,(-J_n)\dot\bp(s)\rangle_{\R^{4n}}\\&\hskip1cm+\langle\bw(-s),\,J^{(n)}\dot\bw(-s)\rangle_{\R^{4n}}+
\langle\bw(s),\,(-J^{(n)})\dot\bw(s)\rangle_{\R^{4n}}\\
&=- \int_{-s}^s\|p(x,\lambda_0)\|^2_{\R^{2n}}\,dx<0,
\end{align*}
thus completing the proof of  \eqref{negdef} 
 and the lemma.
\end{proof}

We will now establish  monotonicity of the Maslov index with respect to the parameter $s$.  The strategy of the proof of the next lemma is similar to the proof of Lemma \ref{lem:cross}. In the lemma we formulate a simple sufficient condition for the crossing form to be sign-definite; however, in the course of its proof we give a general formula \eqref{lastlines}.
We recall that the curve $\Gamma_4$ is parametrized by the parameter $s$ {\em decaying} from $L$ to $s_0$.
\begin{lemma}\label{lem:scross}
For any $\theta\in[0,2\pi]$, any fixed $\lambda\in(0,\lambda_\infty)$, and any $s_0\in(0,L)$, 
each crossing $s_\ast\in(s_0,L)$
of the path $\big\{\Phi_s^\lambda(Y_{s,\lambda})\big\}_{s=s_\ast-\varepsilon}^{s_\ast+\varepsilon}$, with $\varepsilon>0$ small enough, is positive provided the potential $V$ is continuous at the points $\pm s_\ast$ and the matrix 
\begin{equation}\label{Vpd}
\frac12\big(V(-s_\ast)+V(s_\ast)\big) -\lambda I_n \quad\text{ is positive definite.}\end{equation}
 In particular, $B_4=-A_4$ provided $V$ is continuous and positive definite at each point of $[-L,L]$, and $0\notin\Sp(H_\theta)$, $0\notin\Sp(H_{\theta,s_0})$.
\end{lemma}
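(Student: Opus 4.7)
The plan is to follow the framework of the proof of Lemma \ref{lem:cross}, replacing the parameter $\lambda$ by $s$. At a crossing $s_\ast$ I pick a nonzero ${\rm v} \in \Phi_{s_\ast}^\lambda(Y_{s_\ast,\lambda}) \cap (X\times X)$ and a smooth family $v(s) \in \Phi_s^\lambda(Y_{s,\lambda})$ with $v(s_\ast) = {\rm v}$, and aim to show that $Q_\cM({\rm v},{\rm v}) = \omega(v(s_\ast), \dot v(s_\ast))$ is positive.

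The first and decisive step is to choose a convenient $s$-dependent lift of $v(s)$ through solutions $(\bp(\cdot,s),\bw(\cdot,s))^\top$ of \eqref{eq:augodeshort}. Since the $\bp$-equation is $s$-independent, I take $\bp(x,s) = \bp(x)$ $s$-independent as well. For the $\bw$-equation the key observation is that $s\,\mathfrak{u}(s,\theta)$ is itself $s$-independent, so by \eqref{defPsiB} the matrices $\Psi_B(\pm s, s)$ do not depend on $s$. Consequently, if I parametrize $\bw(x,s) = (I_{2n}\otimes e^{\mathfrak{u}(s,\theta)x})\bw_0$ with a fixed $\bw_0$, the traces $\bw(\pm s, s)$ are $s$-independent. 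Hence $\dot v(s_\ast)$ receives contributions only from the $\bp$-slots, yielding $\dot v(s_\ast) = (-A(-s_\ast,\lambda)\bp(-s_\ast),\,0,\,A(s_\ast,\lambda)\bp(s_\ast),\,0)^\top$.

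Next I compute $\langle v(s_\ast), \Omega \dot v(s_\ast)\rangle_{\R^{16n}}$ via the block form \eqref{newOm}. Only the first and third blocks survive:
\begin{equation*}
Q_\cM({\rm v},{\rm v}) = -\langle \bp(-s_\ast), J_n A(-s_\ast,\lambda) \bp(-s_\ast)\rangle_{\R^{4n}} - \langle \bp(s_\ast), J_n A(s_\ast,\lambda) \bp(s_\ast)\rangle_{\R^{4n}}.
\end{equation*}
A short block calculation shows $J_n A(x,\lambda) = \bigl(\lambda I_{2n} - V(x)\otimes I_2\bigr) \oplus (-I_{2n})$, so in terms of the underlying complex valued solution $y$ of \eqref{eq:hill} with $p, q$ as in \eqref{Defpq} one gets $\langle \bp(x), J_n A(x,\lambda) \bp(x)\rangle_{\R^{4n}} = \overline{y(x)}^\top(\lambda I_n - V(x))\, y(x) - \|y'(x)\|^2_{\C^n}$. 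Then I invoke the crossing condition: $v(s_\ast)\in X\times X$, combined with the fact (verified in Section \ref{heas}) that every solution of \eqref{eq:rotshort} automatically satisfies \eqref{eq:wpereigen}, forces $y(s_\ast) = e^{i\theta}y(-s_\ast)$ and $y'(s_\ast) = e^{i\theta} y'(-s_\ast)$. Substituting and simplifying (using that $V$ is real symmetric so $\overline{y(s_\ast)}^\top V(s_\ast) y(s_\ast) = \overline{y(-s_\ast)}^\top V(s_\ast) y(-s_\ast)$) gives the general formula
\begin{equation}\label{lastlines}
Q_\cM({\rm v},{\rm v}) = 2\|y'(-s_\ast)\|^2_{\C^n} + 2\,\overline{y(-s_\ast)}^\top\left(\frac{V(-s_\ast) + V(s_\ast)}{2} - \lambda I_n\right) y(-s_\ast).
\end{equation}
Under hypothesis \eqref{Vpd} both terms are nonnegative and the right-hand side vanishes only if $y(-s_\ast) = y'(-s_\ast) = 0$, which forces $y \equiv 0$ by uniqueness for \eqref{eq:hill}. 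Hence $Q_\cM$ is positive definite and the crossing at $s_\ast$ is positive.

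For the final claim, set $\lambda = 0$ and note that pointwise positive definiteness of $V$ makes \eqref{Vpd} hold at every interior crossing of $\Gamma_4$. The hypotheses $0 \notin \Sp(H_\theta)$ and $0 \notin \Sp(H_{\theta,s_0})$ rule out crossings at the endpoints $s=L$ and $s=s_0$ by Proposition \ref{cor:mult}, so no endpoint half-contribution appears in $A_4$. Since $\Gamma_4$ is traversed as $s$ \emph{decreases} from $L$ to $s_0$, each positive crossing contributes the negative of its multiplicity to $A_4$, giving $A_4 = -B_4$. The principal subtlety is the initial parametrization step: without the observation that $s\,\mathfrak{u}(s,\theta)$ is $s$-independent, one would have to keep track of $\partial_s B(s,\theta)$ acting on $\bw$, and the cancellations yielding the clean formula \eqref{lastlines} would be considerably more delicate.
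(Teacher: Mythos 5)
Your proof is correct and arrives at the paper's formula \eqref{lastlines} and at $A_4=-B_4$ by a genuinely shorter computation. The paper lifts the graph curve ${\rm v}+\phi(s){\rm v}$ to an arbitrary $s$-dependent family of solutions and must then handle four terms $\alpha_1,\dots,\alpha_4$, showing $\alpha_2=0$ via \eqref{symm} and $\alpha_3+\alpha_4=0$ via \eqref{BdB} and the orthogonality of $\Psi_B$, so that only the boundary term $\alpha_1$ survives; your frozen lift (taking $\bp$ independent of $s$ and normalizing the $\bw$-solution at $x=0$, so that $\bw(\pm s,s)=(I_{2n}\otimes e^{\pm s\,\mathfrak{u}(s,\theta)})\bw_0$ is constant in $s$ because $s\,\mathfrak{u}(s,\theta)$ is) produces exactly $\alpha_1$ from the outset. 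The endgame (the $e^{i\theta}$ relation between the traces at $\pm s_\ast$, unitarity, the block formula for $J_nA$) coincides with the paper's, as does the derivation of $A_4=-B_4$.

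Two small repairs are needed. First, your parenthetical claim that $\Psi_B(\pm s,s)$ is $s$-independent is false: by \eqref{defPsiB} the exponent is $\mathfrak{u}(s,\theta)(\pm s+L)$, which depends on $s$ through the normalization at $x=-L$; what is true, and what your argument actually uses, is the $s$-independence of the traces of the solution normalized at $x=0$. Second, since your curve $v(s)$ is not the graph lift appearing in the definition \eqref{QMF}, you should note why $\omega(v(s_\ast),\dot v(s_\ast))$ still equals $Q_\cM({\rm v},{\rm v})$: any two smooth curves lying in $\Phi_s^\lambda(Y_{s,\lambda})$ and passing through ${\rm v}$ at $s_\ast$ have derivatives there differing by a vector in the Lagrangian plane $\Phi_{s_\ast}^\lambda(Y_{s_\ast,\lambda})$ (write each as $u(s)+\phi(s)u(s)$ and use $\phi(s_\ast)=0$), and such a vector pairs to zero with ${\rm v}$ under $\omega$; taking one of the curves to be the graph lift gives the identification. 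With these two observations added, the argument is complete, including the check that vanishing of the right-hand side of \eqref{lastlines} would force ${\rm v}=0$.
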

\begin{proof}
Let $s_\ast\in(s_0,L)$ be a crossing, so that $\Phi_{s_\ast}^{\lambda}(Y_{s_\ast,\lambda})\cap(X\times X)\neq\{0\}$. Let $V^\bot$ be a subspace in $\R^{16n}$ transversal to $\Phi_{s_\ast}^{\lambda}(Y_{s_\ast,\lambda})$.
Then $V^\bot$ is transversal to $\Phi_s^{\lambda}(Y_{s,\lambda})$ for all $s\in[s_\ast-\varepsilon,s_\ast+\varepsilon]$ for $\varepsilon>0$ small enough. Thus, there exists a smooth family of matrices, $\phi(s)$, for $s\in[s_\ast-\varepsilon,s_\ast+\varepsilon]$, viewed as operators $\phi(s): \Phi_{s_\ast}^{\lambda}(Y_{s_\ast,\lambda})\to V^\bot$, such that $\Phi_s^{\lambda}(Y_{s,\lambda})$ is the graph of $\phi(s)$. Fix any nonzero ${\rm v}\in\Phi_{s_\ast}^{\lambda}(Y_{s_\ast,\lambda})\cap(X\times X)$ and consider the curve $v(s)={\rm v}+\phi(s){\rm v}\in\Phi_s^{\lambda}(Y_{s,\lambda})$ for $s\in[s_\ast-\varepsilon, s_\ast+\varepsilon]$ with $v(s_\ast)={\rm v}$. By the definition of $Y_{s,\lambda}$, there is a family of solutions
$(\bp(\cdot\,,s),\bw(\cdot\,,s))^\top$ of \eqref{eq:augodeshort} such that
$v(s)=\Phi_s^{\lambda}\big((\bp(\cdot\,,s),\bw(\cdot\,,s))^\top\big)$. Denoting by dot the derivative with respect to the variable $s$, we claim that 
\begin{equation}\label{snegdef}
\omega\big(v(s_\ast),\,\dot{v}(s_\ast)\big)>0
\end{equation}
provided \eqref{Vpd} holds.
Assuming the claim, we finish the proof as follows: Since for each  nonzero ${\rm v}\in\Phi_{s_\ast}^{\lambda}(Y_{s_\ast,\lambda})\cap(X\times X)$ the crossing form $Q_\cM$ satisfies
\begin{align*}
Q_\cM({\rm v},{\rm v})&=\frac{d}{ds}\Big|_{s=s_\ast}\omega({\rm v},\phi(\lambda){\rm v})=
\frac{d}{ds}\Big|_{s=s_\ast}\omega({\rm v},{\rm v}+\phi(s){\rm v})\\&=\omega\big(v(s_\ast),\,\dot{v}(s_\ast)\big)>0,
\end{align*}
the form is positive definite. Thus, the crossing $s_\ast\in(s_0,L)$ is positive. In particular, taking into account that the path $\Gamma_4=\big\{\Phi^\lambda_s(Y_{s,\lambda})\big\}_{s=L}^{s_0}$ is parametrized by the parameter $s$ {\em decaying} from $L$ to $s_0$, each crossing along $\Gamma_4$ is negative since the assumptions $0\notin\Sp(H_\theta)$, $0\notin\Sp(H_{\theta,s_0})$ and Proposition \ref{cor:mult} imply that all crossings for $\lambda=0$ belong to $(s_0,L)$. Thus, the Maslov index $A_4$ of the path $\Gamma_4$ is equal to $-B_4$.

Starting the proof of claim \eqref{snegdef}, we remark that $s$-derivatives of the solutions $\bp(\cdot,s)$ and $\bw(\cdot,s)$ of \eqref{eq:hillp} and \eqref{eq:rotshort} satisfy the differential equations
\begin{equation}\label{eqpwdot}
\dot{\bp}'(x)=A(x,\lambda)\dot{\bp}(x),\,\,
\dot{\bw}'(x)=\dot{B}(s,\theta){\bw}(x)+B(s,\theta)\dot{\bw}(x),
\end{equation}
where $\dot{B}(s,\theta)$ is computed similarly to \eqref{eq:rotshort}, \eqref{eq:rot} but with $\frac{\mp\theta}{2s}$ replaced by $\frac{\pm\theta}{2s^2}$:
\begin{equation}\label{BdB}
B(s)=-\frac{\theta}{2s}\big(I_{2n}\otimes J\big),\quad
\dot{B}(s)=\frac{\theta}{2s^2}\big(I_{2n}\otimes J\big).
\end{equation}
Clearly, $v(s)=\big(\bp(-s,s),\bw(-s,s),\bp(s,s),\bw(s,s)\big)^\top$
yields
\begin{align*}
\dot{v}(s)&=\big(-\bp'(-s,s)+\dot{\bp}(-s,s), -\bw'(-s,s)+\dot{\bw}(-s,s),\\
&\qquad\qquad \bp'(s,s)+\dot{\bp}(s,s), \bw'(s,s)+\dot{\bw}(s,s)
  \big)^\top.
\end{align*} Using \eqref{eq:omega}, we split the expression for $\omega\big(v(s),\,\dot{v}(s)\big)$ as follows:
\begin{align*}
\langle  v(s),&\Omega\dot{v}(s)\rangle_{\R^{16n}}\\
=&
-\langle \bp(-s,s),\, (J\otimes I_{2n})\bp'(-s,s)\rangle_{\R^{4n}}+
\langle \bp(-s,s),\, (J\otimes I_{2n})\dot{\bp}(-s,s)\rangle_{\R^{4n}}\\&
+\langle \bw(-s,s),\, (J\otimes I_{2n})\bw'(-s,s)\rangle_{\R^{4n}}-
\langle \bw(-s,s),\, (J\otimes I_{2n})\dot{\bw}(-s,s)\rangle_{\R^{4n}}\\&
-\langle \bp(s,s),\, (J\otimes I_{2n})\bp'(s,s)\rangle_{\R^{4n}}-
\langle \bp(s,s),\, (J\otimes I_{2n})\dot{\bp}(s,s)\rangle_{\R^{4n}}\\&
+\langle \bw(s,s),\, (J\otimes I_{2n})\bw'(s,s)\rangle_{\R^{4n}}+
\langle \bw(s,s),\, (J\otimes I_{2n})\dot{\bw}(s,s)\rangle_{\R^{4n}}\\
=&\alpha_1+\alpha_2+\alpha_3+\alpha_4,
\end{align*}
where, using \eqref{eqpwdot} and rearranging terms, the expressions $\alpha_j$ are defined and computed as follows:
\begin{align*}
\alpha_1&=-\langle \bp(-s,s), (J\otimes I_{2n})A(-s,\lambda)\bp(-s,s)\rangle_{\R^{4n}} \\&\hskip2cm-
\langle \bp(s,s), (J\otimes I_{2n})A(s,\lambda){\bp}(s,s)\rangle_{\R^{4n}};\\
\alpha_2&=\langle \bp(-s,s), (J\otimes I_{2n})\dot{\bp}(-s,s)\rangle_{\R^{4n}} -
\langle \bp(s,s), (J\otimes I_{2n})\dot{\bp}(s,s)\rangle_{\R^{4n}}
\\&=
-\int_{-s}^s\frac{d}{dx}\big(\langle \bp(x,s), (J\otimes I_{2n})\dot{\bp}(x,s)\rangle_{\R^{4n}}\big)\,dx
\\&=
-\int_{-s}^s\big(\langle A(x,\lambda) \bp(x,s), (J\otimes I_{2n})\dot{\bp}(x,s)\rangle_{\R^{4n}}\\&\hskip2cm
+\langle \bp(x,s), (J\otimes I_{2n})A(x,\lambda)\dot{\bp}(x,s)\rangle_{\R^{4n}}\big)\,dx \quad\text{(using \eqref{eqpwdot})}
\\&=
-\int_{-s}^s\big(-\langle (J\otimes I_{2n})A(x,\lambda) \bp(x,s), \dot{\bp}(x,s)\rangle_{\R^{4n}}\\&\hskip2cm
+\langle \bp(x,s), (J\otimes I_{2n})A(x,\lambda)\dot{\bp}(x,s)\rangle_{\R^{4n}}\big)\,dx
\\&=0\qquad\text{(using \eqref{symm});}\\
\alpha_3&=\langle \bw(-s,s),\, (J\otimes I_{2n})\bw'(-s,s)\rangle_{\R^{4n}}+
\langle \bw(s,s),\, (J\otimes I_{2n}){\bw}'(s,s)\rangle_{\R^{4n}}\\&=
\langle \bw(-s,s),\,   (J\otimes I_{2n})B(s,\theta)\bw(-s,s)\rangle_{\R^{4n}}
\\&\hskip2cm+
\langle \bw(s,s),\, (J\otimes I_{2n})B(s,\theta){\bw}(s,s)\rangle_{\R^{4n}}
\\&=
\langle \bw(-s,s),\,  -\frac{\theta}{2s} (J\otimes I_{2n})(I_{2n}\otimes J)\bw(-s,s)\rangle_{\R^{4n}}
\\&\hskip2cm+
\langle \bw(s,s),\, -\frac{\theta}{2s} (J\otimes I_{2n})(I_{2n}\otimes J){\bw}(s,s)\rangle_{\R^{4n}}\quad\text{(using \eqref{BdB})}
\\&=
-\frac{\theta}{s}\langle\bw(-s,s), J\otimes(I_n\otimes J)\bw(-s,s)\rangle_{\R^{4n}}
\\&
\quad\text{(since $\Psi_B(x,\theta)$ is orthogonal and commutes with $J\otimes(I_n\otimes J)$)};\\
\alpha_4&=-\langle \bw(-s,s),\, (J\otimes I_{2n})\dot\bw(-s,s)\rangle_{\R^{4n}}+
\langle \bw(s,s),\, (J\otimes I_{2n})\dot{\bw}(s,s)\rangle_{\R^{4n}}\\&=
\int_{-s}^s\frac{d}{dx}\big(\langle \bw(x,s),\,   (J\otimes I_{2n})\dot{\bw}(x,s)\rangle_{\R^{4n}}\big)\,dx\\&=
\int_{-s}^s\big(\langle B(s,\theta)\bw(x,s),\,   (J\otimes I_{2n})\dot{\bw}(x,s)\rangle_{\R^{4n}}\\&\hskip.5cm+\langle \bw(x,s),\,  (J\otimes I_{2n})\big(\dot{B}(s,\theta){\bw}(x,s)+B(s,\theta)\dot\bw(x,s)\big)\rangle_{\R^{4n}}\big)\,dx\,\text{(by \eqref{eqpwdot})}
\\ & = \int_{-s}^s \langle \bw(x,s),   (J\otimes I_{2n}) \dot B(s, \theta) \bw(x,s) \rangle dx \quad \text{(distributing and using \eqref{symm})}
\\&=-\frac{\theta}{2s^2}\int_{-s}^s\langle\bw(x,s), J\otimes(I_n\otimes J)\bw(x,s)\rangle_{\R^{4n}}\,dx
\quad\text{(using \eqref{BdB})}\\
&=\frac{\theta}{s}\langle\bw(-s,s), J\otimes(I_n\otimes J)\bw(-s,s)\rangle_{\R^{4n}}\end{align*}
(since $\Psi_B(x,\theta)$ is orthogonal).
Thus, $\langle  v(s),\,\Omega\dot{v}(s)\rangle_{\R^{16n}}=\alpha_1$. After a short calculation using the condition $\bp(s_\ast,s_\ast)=\big(I_{2n}\otimes 
U(\theta)\big)\bp(-s_\ast,s_\ast)$ (which holds since $s_*$ is a conjugation point), the 
orthogonality of $U(\theta)$, and formulas
\begin{align*}
& \big(J\otimes I_{2n}\big)A(\pm s_\ast,\lambda)=
\big((\lambda I_n-V(\pm s_\ast))\otimes I_2\big)\oplus\big(-I_{2n}\big),\\
&I_{2n}\otimes U(\theta)^{\pm 1}=\big(I_n\otimes U(\theta)^{\pm 1}\big)\oplus\big(I_n\otimes U(\theta)^{\pm 1}\big),\\
&\big(I_{2n}\otimes U(\theta)^{-1}\big)\big(J\otimes I_{2n}\big)A(s_\ast,\lambda)\big(I_{2n}\otimes U(\theta)\big)\\&\hskip3cm=
\big((\lambda I_n-V(s_\ast))\otimes I_2\big)\oplus\big(-I_{2n}\big),
\end{align*}
we conclude that $\omega\big(v(s_\ast),\,\dot{v}(s_\ast)\big)=\alpha_1\Big|_{s=s_\ast}$ is equal to
\begin{align*}
-\big\langle \bp(-s_\ast, s_\ast), 
\Big(\big(2\lambda I_{2n}  -\big(V(-s_\ast)+V(s_\ast)\big)\otimes I_2\big)\oplus\big(-2I_{2n}\big)\Big)\bp(-s_\ast,s_\ast)\big\rangle_{\R^{4n}}.\end{align*}
Since $\bp(-s_\ast,s_\ast)=\big(p(-s_\ast,s_\ast), q(-s_\ast,s_\ast)\big)^\top$, we therefore have the following final formula for the crossing form:
\begin{align}\omega\big(v(s_\ast),\,\dot{v}(s_\ast)\big)&
=2\big\langle p(-s_\ast,s_\ast),
\big(\frac12\big(V(-s_\ast)+V(s_\ast)\big)\otimes I_2-\lambda I_{2n}\big)p(-s_\ast,s_\ast)\big\rangle_{\R^{2n}}\nonumber\\&\hskip2cm
+2 \|q(-s_\ast,s_\ast)\|_{\R^{2n}}^2.\label{lastlines}
\end{align}
In particular, \eqref{Vpd} implies \eqref{snegdef}.
\end{proof}

We will prove next a version of Lemma \ref{lem:llss} (ii) for $\theta=0$ or $\theta=2\pi$. It is interesting to note that although the conclusion of the next lemma concerns the spectrum of the operators $H_{0,s}$, its proof uses topological arguments which led to Corollary \ref{cor3.9}. We recall the notation $\mo(\cH)=\dim(\ran\,\cP)$ for the Morse index of an invertible selfadjoint semi-bounded from above operator $\cH$; here, 
\begin{equation}\label{DefRP}
\cP=(2\pi i)^{-1}\int_\gamma(z-\cH)^{-1}\,dz
\end{equation} is the Riesz projection corresponding to the positive part $\Sp(\cH)\cap(0,+\infty)$ of the spectrum of $\cH$, and $\gamma$ is a smooth curve enclosing this part of the spectrum.

\begin{lemma}\label{llssnew}
Assume that $\theta=0$ or $\theta=2\pi$ and that the potential $V$ is continuous at $x=0$ and the matrix $V(0)$ is invertible. If $\lambda_\infty>\|V\|_\infty$ and $s_0\in(0,L]$ is sufficiently small then $0\notin\Sp(H_{0,s_0})$ and $B_1=\mo(V(0))$; in particular, if $V(0)$ is negative definite then $B_1=0$.
\end{lemma}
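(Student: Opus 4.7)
The plan is to translate $B_1$ into a count of eigenvalues of $H_{0,s_0}$ on $L^2([-s_0,s_0])$ with periodic boundary conditions and then to estimate this count by comparing with the explicitly solvable frozen operator $\wti H_{s_0}:=\frac{d^2}{dx^2}+V(0)$ carrying the same boundary conditions.

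First I would invoke Proposition \ref{cor:mult} to identify each crossing of $\Gamma_1=\{\Phi_{s_0}^\lambda(Y_{s_0,\lambda}):\lambda\in[0,\lambda_\infty]\}$ with an eigenvalue of $H_{0,s_0}$ in $[0,\lambda_\infty]$, with matching multiplicities. Lemma \ref{lem:cross} then says every interior crossing is negative, and Lemma \ref{lem:llss}(i) shows $\lambda_\infty\notin\Sp(H_{0,s_0})$; once I establish $0\notin\Sp(H_{0,s_0})$, which will emerge from the estimates below, I can conclude that $B_1=-A_1$ equals the total multiplicity of the eigenvalues of $H_{0,s_0}$ in the open interval $(0,\lambda_\infty)$.

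Next I would compute $\Sp(\wti H_{s_0})$ explicitly by simultaneously diagonalizing $V(0)$, with eigenvalues $v_j$, $j=1,\dots,n$, and using the periodic Fourier basis $e^{i\pi k x/s_0}$ on $[-s_0,s_0]$, obtaining $\Sp(\wti H_{s_0})=\{v_j-(\pi k/s_0)^2:\,j=1,\dots,n,\ k\in\Z\}$. For $s_0$ small enough that $(\pi/s_0)^2>\|V\|_\infty+\lambda_\infty$, the $k\neq 0$ contributions lie strictly below $0$, so the eigenvalues of $\wti H_{s_0}$ in $[0,\lambda_\infty]$ are exactly the positive $v_j$, contributing total multiplicity $\mo(V(0))$. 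The comparison step rests on Theorem \ref{KatoTh}: on $L^2([-s_0,s_0])$ the operator $H_{0,s_0}$ differs from $\wti H_{s_0}$ only by multiplication by $V(x)-V(0)$, whose sup norm on $[-s_0,s_0]$ tends to $0$ as $s_0\to 0^+$ by the continuity of $V$ at $x=0$. Setting $\delta:=\min_j|v_j|>0$, which is positive by the invertibility of $V(0)$, and considering the homotopy $H(t):=\wti H_{s_0}+t(V(\cdot)-V(0))$, $t\in[0,1]$, I would choose $s_0$ so small that $\|V-V(0)\|_{L^\infty([-s_0,s_0])}<\frac{1}{2}\min\{\delta,\lambda_\infty-\|V\|_\infty\}$, and apply Theorem \ref{KatoTh} uniformly in $t$ to prevent any eigenvalue of $H(t)$ from ever reaching $0$ or $\lambda_\infty$.

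The final step is a continuous-tracking argument: since the eigenvalues of $H(t)$ depend continuously on $t$ (standard min-max for self-adjoint operators with compact resolvent) and cannot cross the fixed endpoints of $(0,\lambda_\infty)$, the count of eigenvalues in this interval, with multiplicity, is preserved along the homotopy; therefore $H_{0,s_0}=H(1)$ has the same count as $\wti H_{s_0}=H(0)$, namely $\mo(V(0))$. This simultaneously yields $0\notin\Sp(H_{0,s_0})$ and $B_1=\mo(V(0))$, and the claim for negative-definite $V(0)$ follows at once because then $\mo(V(0))=0$. The main technical obstacle I anticipate is this continuous-tracking step, because Theorem \ref{KatoTh} by itself only provides Hausdorff closeness of the spectra; the invertibility assumption on $V(0)$ is exactly what furnishes a uniform spectral gap of $\wti H_{s_0}$ at $\lambda=0$, and the hypothesis $\lambda_\infty>\|V\|_\infty$ plays the analogous role at the upper endpoint.
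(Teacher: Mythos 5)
Your argument is correct, and its core comparison step is genuinely different from the paper's. Both proofs share the same frame: identify $B_1$ via Proposition \ref{cor:mult} and Lemma \ref{lem:cross} with the total multiplicity of eigenvalues of $H_{0,s_0}$ in $(0,\lambda_\infty)$, and compare with the explicitly diagonalizable frozen operator built from $V(0)$, whose positive spectrum for small $s_0$ contributes exactly $\mo(V(0))$. But where you diverge is decisive: you fix a small $s_0$, work directly on $[-s_0,s_0]$, and run the linear homotopy $H(t)=\frac{d^2}{dx^2}+V(0)+t(V-V(0))$, using Theorem \ref{KatoTh} to keep a uniform spectral gap around $0$ (of size $\delta/2$, $\delta=\min_j|v_j|>0$ by invertibility of $V(0)$) and below $\lambda_\infty$ along the whole homotopy, and then min--max continuity of the ordered eigenvalues $\lambda_k(t)$ to conclude the sign of each $\lambda_k(t)$ is constant in $t$, so the positive-eigenvalue count is preserved; the needed finiteness of this count at fixed $s_0$ is immediate from comparison with the free periodic Laplacian. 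The paper instead rescales $H_{0,s}$ to the fixed interval $[-L,L]$, compares $H_0(s)$ with the constant-coefficient operator $H_0^{(0)}(s)$ as $s\to0$, and passes to Riesz projections, where the trace estimate \eqref{tracein} requires a bound on $\mo(H_0(s))$ that is \emph{uniform in} $s$; this is the claim \eqref{clHmor}, which the paper can only obtain from the topological input (Corollary \ref{cor3.9} together with Lemmas \ref{lem:llss} and \ref{lem:cross}). Your route avoids that uniform-in-$s$ bound entirely, and hence avoids any reliance on Corollary \ref{cor3.9}, making the lemma a purely perturbative/spectral statement; what you lose is only the paper's (deliberately highlighted) illustration that the Maslov-index machinery can feed back into spectral conclusions. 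The remaining ingredients you use (compact resolvent, Lipschitz dependence of eigenvalues on the bounded potential, the explicit Fourier computation on $[-s_0,s_0]$, and absence of endpoint crossings at $\lambda=0$ and $\lambda=\lambda_\infty$) are all standard and correctly deployed, so I see no gap.
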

\begin{proof}
Since $H_{0,s}=H_{2\pi,s}$ because the boundary conditions \eqref{sthetaBC} are the same for $\theta=0$ and $\theta=2\pi$, and taking into account Proposition \ref{cor:mult}, we will consider only the case $\theta=0$.
If $\theta=0$ and $s>0$ then $H_{0,s}$ is the operator in $L^2([-s,s])$ defined by $(H_{0,s}y)(x)=y''(x)+V(x)y(x)$, $|x|\le s$, with the domain
\begin{equation*}\begin{split}
\dom(H_{0,s})&=\big\{y\in L^2([-s,s])\big|\, y,y'\in AC_{\loc}([-s,s]), y''  \in L^2([-s,s])\text{ and the} \\
&\qquad\text{periodic boundary conditions $y(s)=y(-s)$, $y'(s)=y'(-s)$ hold}\big\}.\end{split}
\end{equation*}
It is convenient to ``rescale'' the operator $H_{0,s}$ to $L^2([-L,L])$ by introducing the operator $H_0(s)$ in $L^2([-L,L])$
defined by \begin{equation*}
(H_0(s)y)(x)=\big({L}/{s}\big)^2y''(x)+V\big(sx/L\big)y(x), \quad |x|\le L, \end{equation*}
with the domain
\begin{equation*}\begin{split}
\dom(H_0(s))&=\big\{y\in L^2([-L,L])\big|\, y,y'\in AC_{\loc}([-L,L]), y''  \in L^2([-L,L])\text{ and the} \\
&\text{periodic boundary conditions $y(L)=y(-L)$, $y'(L)=y'(-L)$ hold}\big\}.\end{split}
\end{equation*}
Writing the eigenvalue equation $(H_{0,s}y)(x)=\lambda y(x)$, $|x|\le s$, at the point $x=s\widehat{x}/L$ for $|\widehat{x}|\le L$, introducing $z(\widehat{x})=y(s\widehat{x}/L)$, and passing to the eigenvalue equation $(H_0(s)z)(\widehat{x})=\lambda z(\widehat{x})$, $|\widehat{x}|\le L$,  we observe that 
\begin{equation}\label{speceq}
\Sp(H_{0,s}; L^2([-s,s]))=\Sp(H_0(s); L^2([-L,L]))\quad\text{for all $s\in(0,L]$}.\end{equation}
In addition to $H_0(s)$, we introduce a constant coefficient operator  $H_0^{(0)}(s)$ on $L^2([-L,L])$
defined by $(H_0^{(0)}(s)y)(x)=\big({L}/{s}\big)^2y''(x)+V(0)y(x)$, $|x|\le L$, with the domain $\dom H_0^{(0)}(s)=\dom H_0(s)$. Since 
\begin{equation}\label{limVsL}
\|H_0(s)-H_0^{(0)}(s)\|_{\cB(L^2([-L,L]))}=\sup_{|x|\le L}\|V(sx/L)-V(0)\|\to0\text{ as $s\to0$}
\end{equation} by the continuity assumption in the lemma, we can use Theorem \ref{KatoTh} to conclude that 
\begin{equation}\label{limsp}
\dist\big(\Sp(H_0(s)), \Sp(H_0^{(0)}(s))\big)\to0\text{ as $s\to0$}.
\end{equation}
Since the operator $H_0^{(0)}(s)$ is a constant coefficient operator with periodic boundary conditions, passing to  the Fourier series $y(x)=\sum_{k\in\bbZ}y_ke^{i\pi kx/L}$, $|x|\le L$, we calculate:
\begin{equation}\label{spH00}
\Sp(H_0^{(0)}(s))=\bigcup_{k\in\bbZ}\Big(-\big(\pi k/s\big)^2+\Sp(V(0))\Big).
\end{equation}
Let $\nu_j$ denote the eigenvalues of the matrix $V(0)$ and let $\varkappa=\mo(V(0))$ denote the number of the positive eigenvalues counting multiplicities. Since $0\notin\Sp(V(0))$ by the assumption, we can find a $\delta>0$, and enumerate the eigenvalues in $\Sp(V(0))$ such that
\[ - \|V(0)\|\le\dots \le \nu_{-1}<-\delta<0<\delta<\nu_1\le\dots\le\nu_\varkappa\le\|V(0)\|.\]
Choose $s_1\in(0,L)$ so small that $\|V(0)\|+\delta<(\pi/s_1)^2$, see Figure \ref{Ftheta0}. 
\begin{figure}
\begin{picture}(100,100)(-20,0)
\put(2,2){0}
\put(10,5){\vector(0,1){95}}
\put(5,10){\vector(1,0){95}}
\put(71,40){\text{\tiny $\Gamma_2$}}
\put(12,40){\text{\tiny $\Gamma_4$}}
\put(45,75){\text{\tiny $\Gamma_3$}}
\put(43,14){\text{\tiny $\Gamma_1$}}
\put(100,12){$\lambda$}
\put(12,100){$s$}
\put(80,20){\line(0,1){60}}
\put(80,8){\line(0,1){4}}
\put(78,0){$\lambda_\infty$}
\put(10,20){\line(1,0){70}}
\put(10,80){\line(1,0){70}}
\put(0,78){$L$}
\put(-2,18){$s_0$}
\put(8,20){\line(1,0){4}}
\put(-2,38){$s_1$}
\put(8,38){\line(1,0){4}}
\put(-2,28){$s_4$}
\put(8,28){\line(1,0){4}}
\put(40,20){\circle*{4}}
\put(10,50){\circle*{4}}
\put(10,70){\circle*{4}}
\put(20,80){\circle*{4}}
\put(40,80){\circle*{4}}
\put(60,80){\circle*{4}}
\put(20,87){{\tiny \text{$\theta$-eigenvalues}}}
\put(17,24){{\tiny \text{$(\theta,s_0)$-eigenvalues}}}
\put(-10,20){\rotatebox{90}{{\tiny conjugate points}}}
\put(90,20){\rotatebox{90}{{\tiny no conjugate points}}}
\end{picture}
\caption{$\theta=0$  and the numbers $s_1>s_2>s_3>s_4\ge s_0>0$ in the proof of Lemma \ref{llssnew} are small enough}\label{Ftheta0}
\end{figure} 
Then, for each $s\in(0,s_1]$, the eigenvalues $\nu_j-(\pi k/s)^2$ of the operator $H_0^{(0)}(s)$ are positioned as follows:
\begin{align*}
0&<\delta<\nu_1\le\dots\le\nu_\varkappa, \text{ for $j\ge1$ and $k=0$,}\\
\dots&\le\nu_1-(k\pi/s)^2\le\dots\le\nu_\varkappa-(k\pi/s)^2
\le\dots
\le\nu_1-(\pi/s)^2\le\dots\le\nu_\varkappa-(\pi/s)^2\\&\quad\le\|V(0)\|-(\pi/s)^2\le\|V(0)\|-(\pi/s_1)^2<-\delta<0, \text{ for $j\ge1$ and $k\in\bbZ\setminus\{0\}$},\\  \nu_j&-(k\pi/s)^2<-\delta,\text{ for $j\le-1$ and $k\in\bbZ$}.
\end{align*}
In particular, for each $s\in(0,s_1]$ we have $\Sp(H_0^{(0)}(s))\cap[-\delta,\delta]=\emptyset$, 
\begin{equation}\label{deltsin}
\Sp(H_0^{(0)}(s))\cap(0,+\infty)=\{\nu_1,\dots,\nu_\varkappa\}\subset\big(\delta,\|V(0)\|\big),
\end{equation} and $\mo(H_0^{(0)}(s))=\mo(V(0))$. Using \eqref{limsp}, \eqref{deltsin} one can find a sufficiently small $s_2\in(0,s_1)$ such that for all $s\in(0,s_2]$ one has:
\begin{equation}\label{Idc}
0\notin\Sp(H_0(s))\text{ and } \Sp(H_0(s))\cap(0,+\infty)\subset \big(\delta/2,\|V(0)\|+\delta/2\big).
\end{equation}
By \eqref{speceq}, conclusions \eqref{Idc}  hold with $\Sp(H_0(s))$ replaced by $\Sp(H_{0,s})$.

Fix any $s_3\in(0,s_2)$. We now claim that
\begin{equation}\label{clHmor}
\sup_{s\in(0,s_3]}\mo (H_0(s))<\infty.
\end{equation}
Postponing the proof of claim \eqref{clHmor}, we proceed with the proof of the lemma. 

Let $P_s$, respectively, $P_s^{(0)}$ denote the Riesz projection for the operator $H_0(s)$, respectively, $H_0^{(0)}(s)$ that corresponds to the positive part of its spectrum. The Riesz projection is defined by formula \eqref{DefRP} with $\cH$ replaced by $H_0(s)$, respectively, $H_0^{(0)}(s)$, and with $\gamma$ being the rectangle in the complex plane with the vertices at the points $\pm i\delta$ and $\|V(0)\|+\delta\pm i\delta$. Inclusions \eqref{deltsin}, \eqref{Idc} imply, for $s\in(0,s_2]$,
\begin{equation}\label{distsp}
\dist\big(\Sp(H_0^{(0)}(s)),\gamma\big)\ge\delta,\, \dist\big(\Sp(H_0(s)),\gamma\big)\ge\delta/2.
\end{equation}
Using \eqref{distsp} and that $H_0(s)$ is selfadjoint, for $z\in\gamma$ we arrive at the estimate \[\|\big(z-H_0(s)\big)^{-1}\|=1/\dist(\Sp(H_0(s)),z)\le1/\dist(\Sp(H_0(s)),\gamma)\le2/\delta,\]
and a similar estimate for $\|\big(z-H_0(s)\big)^{-1}\|$.
Using \eqref{limVsL} and 
\begin{equation*}
P_s-P_s^{(0)}=(2\pi i)^{-1}\int_\gamma\Big(\big(z-H_0(s)\big)^{-1}
\big(H_0(s)-H_0^{(0)}(s)\big)\big(z-H_0^{(0)}(s)\big)^{-1}
\Big)\,dz,
\end{equation*}
we conclude that
\begin{equation}\label{limPPs}
\big\|P_s-P_s^{(0)}\big\|_{\cB(L^2([-L,L]))}\to0\text{ as $s\to0$}.\end{equation}
Noting that $\mo (H_0^{(0)}(s))=\dim(\ran P_s^{(0)})=\tr (P_s^{(0)})=\mo(V(0))$ by \eqref{deltsin} and that the dimensions $\mo (H_0(s))=\dim(\ran P_s)=\tr (P_s)$ are bounded uniformly for $s\in(0,s_3]$ by \eqref{clHmor}, we introduce the subspace $R_s=\ran P_s^{(0)}\oplus\ran P_s$ and denote $R=\sup_{s\in(0,s_3]}\dim R_s<\infty$. Viewing $P_s-P_s^{(0)}$ as a finite dimensional operator in $R_s$, we infer, using a simple inequality relating trace and norm:
\begin{equation}\label{tracein}\begin{split}
\big|\tr(P_s)-\tr(P_s^{(0)})\big|&=\big|\tr(P_s-P_s^{(0)})\big|\\&\le R\big\|P_s-P_s^{(0)}\big\|_{\cB(R_s)}\le R\big\|P_s-P_s^{(0)}\big\|_{\cB(L^2([-L,L]))}.\end{split}
\end{equation} 
We recall the $0\notin\Sp(H_{0,s})$ and $\mo(H_{0,s})=\mo(H_0(s))$ by \eqref{Idc} and \eqref{speceq}.
Thus, using Proposition \ref{cor:mult}, for any $s_0\in(0,s_3]$, the number $B_1=\mo(H_{0,s_0})$ of the crossings along $\Gamma_1$ is equal to the Morse index $\mo(H_0(s_0))=\tr(P_{s_0})$.  In order to establish the required in the lemma equality $B_1=\mo(V(0))$, it suffices to show that $\tr(P_{s_0})=\tr(P_{s_0}^{(0)})$ for all small enough $s_0$. Indeed, the latter equality implies
\begin{equation}\label{fineq}
B_1=\mo(H_{0,s_0})=\mo(H_0(s_0))=\tr(P_{s_0})=\tr(P_{s_0}^{(0)})=\mo(V(0)),
\end{equation}
as needed in the lemma.  
Since the functions $s\mapsto\tr(P_s)$, $s\mapsto\tr(P_s^{(0)})$ take integer values, it suffices to show the existence of a small 
$s_4\in(0,s_3)$ such that the right-hand side of \eqref{tracein} is smaller than $1$ for all $s\in(0,s_4]$. But this follows from \eqref{limPPs}, thus concluding the proof of the lemma.

It remains to prove claim \eqref{clHmor}. This is the part of the proof  based on Corollary \ref{cor3.9}. Since $0\notin\Sp(H_{0,s})$ and $\mo(H_{0,s})=\mo(H_0(s))$ for all $s\in(0,s_2]$ by \eqref{Idc} and \eqref{speceq}, in order to show \eqref{clHmor} it suffices to check that the number of crossings $B_1=\mo(H_{0,s_0})$ along the curve $\Gamma_1=\Gamma_{1,(0,s_0)}$ is estimated from above by a finite number that does not depend on $s_0\in(0,s_3]$ (we recall that  $s_3\in(0,s_2))$. Take any $s_0\in(0,s_3]$ and construct the curve $\Gamma=\Gamma_{1,(0,s_0)}\cup\Gamma_2\cup\Gamma_3\cup\Gamma_{4,(0,s_0)}$ as described in Remark \ref{remN3.9}. First, we remark that due to Proposition \ref{cor:mult} there are no crossings of the portion of the curve $\Gamma_{4,(0,s_0)}$ given by $\{\Phi_{s}^0(Y_{s,0}))\big| s\in[s_0,s_2]\}$ since $0\notin\Sp(H_{0,s})$ for all $s\in[s_0,s_2]$.
Second, we remark that with no loss of generality we may assume that the curve $\Gamma_{4,(0,s_0)}\cup\Gamma_3$ is regular. (Indeed, otherwise, use  homotopy with the fixed endpoints $\Phi_{s_2}^0(Y_{s_2,0})$ and $\Phi_L^{\lambda_\infty}(Y_{L,\lambda_\infty})$ of the portion of this curve given by $\{\Phi_{s}^0(Y_{s,0}))\big| s\in[s_2,L]\}\cup\Gamma_3$ to transform it into a regular curve.) Since the regular crossings are  isolated, based on the two remarks just made we conclude that the number $B_4$ of the crossings of $\Gamma_{4,(0,s_0)}$ is finite and does not depend on $s_0$. Clearly, $B_3$ is finite and does not depend on $s_0$ either. By Corollary \ref{cor3.9} we know that $0=A_1+A_2+A_3+A_4$. By Lemma \ref{lem:llss} (i) we have $A_2=0$. By Lemma \ref{lem:cross} we know that $A_3=B_3$ and $A_1=-B_1$. Combining all this, we have the required estimate
\begin{align*}
\mo(H_{0,s_0})&=B_1=-A_1=A_3+A_4=B_3+A_4\le B_3+|A_4|\le B_3+B_4,
\end{align*}
which concludes the proof of claim \eqref{clHmor} and the lemma.\end{proof}

We will now summarize the count of eigenvalues and conjugate points via the Morse and Maslov indices. Recall that the number of positive eigenvalues of a self-adjoint  operator (counting their multiplicities) is called its {\em Morse index}, and is denoted by $\mo(\cdot)$. 
Also, recall definition \eqref{DefMI} of the Maslov index. The Maslov index of $\Gamma_4=\Gamma_{4,(\theta,s_0)}$ is the number $A_4=A_{4,(\theta,s_0)}$ which can also be thought of as the {\em Maslov index} of equation (\ref{eq:hill}). Also, we recall definition \eqref{defBi} of the numbers $B_i$, and note that the expressions for $A_i$ and $B_i$ do not contain the first and the last terms provided the endpoints of $\Gamma_i$ are {\em not} crossings. 
In this case, we can interpret $B_3$ and $B_4$ in terms of the eigenvalues of $H_\theta$ and the conjugate points.  
\begin{theorem} \label{th:mas2}  Let us fix $\theta\in[0,2\pi]$, and let the numbers $A_i=A_{i,(\theta,s_0)}$ and $B_i=B_{i,(\theta,s_0)}$ be defined in \eqref{defAi} and \eqref{defBi} for a (small) $s_0>0$ and a (large) $\lambda_\infty>0$. Then the following assertions hold.
\begin{enumerate}
\item[(i)] The Maslov index of the curve $\Gamma$ is zero for any $s_0\in(0,L)$ and $\lambda_\infty>0$.
\item[(ii)] If $0\notin\Sp(H_\theta)$ then the Maslov index $A_3$ of the curve $\Gamma_3$ satisfies $A_3=B_3$.
\item[(iii)] If $0\notin\Sp(H_\theta)$ and $\lambda_\infty$ is large enough then $B_3$ is equal to the number of the positive  $\lambda$ for which there exists a solution to the original boundary value problem for equation (\ref{eq:hillsys}) on $[-L,L]$ subject to the boundary condition  (\ref{eq:pereigen}) counting multiplicities, that is, to  the {\em Morse index}  of the operator $H_\theta$ in $L^2([-L,L])$:
 $$B_3=\mo\big(H_\theta\big).$$
\item[(iv)] If $0\notin\Sp(H_\theta)$ and $0\notin\Sp(H_{\theta,s_0})$ for some $s_0>0$ then $B_4$ is equal to the number of the conjugate points for  $\lambda =0$ counting multiplicities, that is, the number of such $s\in(s_0,L)$ for which there exists a nontrivial solution to the boundary value problem for equation (\ref{eq:hillsys}) on $[-s,s]$ subject to the boundary condition \eqref{eq3.28}.
\item[(v)] If $\theta\in(0,2\pi)$, $\lambda_\infty>0$ is large enough and $s_0>0$ is small enough then $A_3 = -A_4$. If, in addition,  $0\notin\Sp(H_\theta)$ then the Maslov index and the Morse index are related as follows:
\begin{equation}\mo(H_\theta)=-\mi(\Gamma_4, X\times X).\end{equation}
\item[(vi)] If the potential $V$ is continuous and positive definite on $[-L,L]$, and the assumptions in $(iv)$ hold then $A_4=-B_4$. If, in addition, the assumptions in $(v)$ hold then the Morse index can be computed as follows:
\begin{equation}\mo(H_\theta)=B_4.\end{equation}
\item[(vii)] If $\theta=0$ or $\theta=2\pi$, $\lambda_\infty>0$ is large enough and $s_0>0$ is small enough, the potential $V$ is continuous at the point $x=0$, and $0\notin\Sp(V(0))$ then $A_1=-B_1=\mo(V(0))$. If, in addition, $0\notin\Sp(H_\theta)$ then 
\begin{equation}\label{cEVn}
\mo(H_\theta)=-\mi(\Gamma_4, X\times X)+\mo(V(0)).
\end{equation}
\end{enumerate}
Finally, if $0$ is not in the spectrum of the operator $H$ in $L^2(\bbR)$, then, for all $\theta\in[0,2\pi]$, the Morse index of $H_\theta$ does not depend on $\theta$, and is greater than or is equal to the number of disjoint spectral bands of $H$ in the (unstable) right half-line, and is  equal to the number of the spectral bands of $H$ if they are disjoint. 
\end{theorem}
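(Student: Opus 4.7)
The proof is primarily an assembly of the lemmas already proved in Sections 3 and 4, with a short separate argument for the final concluding sentence. Part (i) is exactly Corollary \ref{cor3.9}. Part (ii) follows from Lemma \ref{lem:cross}: all crossings along $\Gamma_3$ have the same sign, and since $\Gamma_3$ is parametrized by $\lambda$ decreasing from $\lambda_\infty$ to $0$, each crossing contributes $+1$ times its multiplicity. Part (iii) combines (ii) with Proposition \ref{cor:mult}, which identifies crossings on $\Gamma_3$ with eigenvalues of $H_{\theta,L}=H_\theta$ (with multiplicities), plus Lemma \ref{lem:llss} (i) which ensures $\lambda_\infty>\|V\|_\infty$ is large enough that no positive eigenvalue is missed. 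Part (iv) is a direct restatement of Proposition \ref{cor:mult} applied pointwise along $\Gamma_4$.

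For part (v), I would use Lemma \ref{lem:llss} (both (i) and (ii)) to get $B_1=B_2=0$, hence $A_1=A_2=0$, for $\theta\in(0,2\pi)$ with $s_0$ small and $\lambda_\infty$ large. Combined with part (i), the catenation identity $A_1+A_2+A_3+A_4=0$ gives $A_3=-A_4$. Applying (iii) yields $\mo(H_\theta)=-A_4=-\mi(\Gamma_4,X\times X)$. For part (vi), Lemma \ref{lem:scross} shows each crossing along $\Gamma_4$ has positive crossing form under the pointwise positivity hypothesis; since $\Gamma_4$ is traversed with $s$ decreasing from $L$ to $s_0$, every crossing contributes $-1$ times multiplicity, giving $A_4=-B_4$, and then (v) gives $\mo(H_\theta)=B_4$. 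For part (vii), the periodic case $\theta\in\{0,2\pi\}$: apply Lemma \ref{llssnew} to get $B_1=\mo(V(0))$ and that $0\notin\Sp(H_{\theta,s_0})$ for $s_0$ small; Lemma \ref{lem:cross} then gives $A_1=-B_1=-\mo(V(0))$, and Lemma \ref{lem:llss} (i) gives $A_2=0$. Using (i) and (iii) one obtains
\[
0=A_1+A_2+A_3+A_4=-\mo(V(0))+\mo(H_\theta)+\mi(\Gamma_4,X\times X),
\]
which rearranges to \eqref{cEVn}.

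For the concluding sentence, the plan is as follows. If $0\notin\Sp(H)$, then by Proposition \ref{prop:garequiv} we have $0\notin\Sp(H_\theta)$ for every $\theta\in[0,2\pi]$, so $\mo(H_\theta)$ is well-defined. The operator-valued function $\theta\mapsto H_\theta$ is a real-analytic self-adjoint family (in the sense of Kato), and its eigenvalues form a countable family of continuous (in fact real-analytic) branches $\lambda_k(\theta)$ accumulating only at $-\infty$. The image $\{\lambda_k(\theta):\theta\in[0,2\pi]\}$ of each branch is a closed interval, and the union over $k$ is $\Sp(H)$ — these intervals are precisely the spectral bands. Since no branch can cross $0$ (else $0\in\Sp(H)$, contradicting the hypothesis), the integer-valued function $\theta\mapsto\#\{k:\lambda_k(\theta)>0\}=\mo(H_\theta)$ is continuous, hence constant.

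Finally, the number of branches lying entirely in $(0,\infty)$ is exactly $\mo(H_\theta)$, and each such branch produces one spectral band contained in $(0,\infty)$. Different branches may produce overlapping or coinciding bands, so the number of \emph{disjoint} spectral bands in $(0,\infty)$ is at most $\mo(H_\theta)$, with equality when the bands are pairwise disjoint. The main conceptual obstacle here is only the invocation of Kato's real-analytic perturbation theory to justify the continuous enumeration of eigenvalue branches; the remainder is bookkeeping built on the lemmas of Sections 3--4, and the algebraic combination of the signed crossing counts $A_i$ and $B_i$ around the four sides of the square in Figure \ref{F1}.
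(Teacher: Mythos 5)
Your handling of items (i)--(vii) is correct and coincides with the paper's own (very terse) proof: the paper likewise disposes of (i) via Corollary \ref{cor3.9}, (ii) via Lemma \ref{lem:cross}, (iii)--(iv) via Proposition \ref{cor:mult}, (v) via (i) and Lemma \ref{lem:llss}, (vi) via Lemma \ref{lem:scross}, and (vii) via Lemma \ref{llssnew}; your explicit bookkeeping $0=A_1+A_2+A_3+A_4$ with $A_2=0$ and $A_1=0$ (respectively $A_1=-\mo(V(0))$ in the periodic case) is exactly the intended assembly.

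Where you genuinely add something is the final sentence on the $\theta$-independence of $\mo(H_\theta)$ and the band count: the paper's proof paragraph does not address it at all, so your Floquet-theoretic argument is a supplement rather than a different route, and it is essentially sound. Since $\Sp(H)=\bigcup_{\theta}\Sp(H_\theta)$ (Proposition \ref{prop:garequiv}), the hypothesis $0\notin\Sp(H)$ prevents any eigenvalue branch $\lambda_k(\theta)$ from touching $0$, so the integer $\mo(H_\theta)$ is locally constant in $\theta$, and the comparison with the number of disjoint positive bands follows as you describe (distinct branches may trace overlapping or coinciding bands, whence the inequality). One point to make precise: $\dom(H_\theta)$ depends on $\theta$, so Kato's analytic perturbation theory does not apply verbatim to the family as written; first conjugate by the unitary multiplication $y\mapsto e^{i\theta(x+L)/(2L)}y$ (equivalently, use the transform $W_\theta$ of \eqref{defUtheta}, cf.\ \cite[Section XIII.16]{RS78}) to obtain a family with $\theta$-independent periodic domain whose coefficients depend analytically on $\theta$; continuity of the branches, which is all your argument needs, is then standard.
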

\begin{proof}
Assertion (i) is proved in Corollary \ref{cor3.9}. Assertion (ii) follows from Lemma \ref{lem:cross}, while (iii) and (iv) are proved in Proposition \ref{cor:mult}. Assertion (v) follows from (i) and Lemma \ref{lem:llss} while (vi) follows from Lemma \ref{lem:scross}. Assertion (vii) follows from Lemma \ref{llssnew}.
\end{proof}

\section{The Mathieu equation: an example}\label{sec:me} Now we will use a well known Mathieu equation, see, e.g. \cite{BeOr78, JS99},
 as an illustration of the phenomena described in Theorem \ref{th:mas2}. This example will also give some indication as to how to handle the loss of regularity of crossings when $\theta=0$ or $\theta=2\pi$ and the curve $\Gamma_{(0,0)}$ is constructed as in Corollary \ref{cor:3.10}.
 Specifically, let us consider the Mathieu equation
\begin{equation} \label{eq:mathieu}
y'' + 3.2 \cos(2 x)y = \lambda y, \, x\in [-\pi,\pi],
\end{equation}
where we have chosen the usual parameter in the equation to be $-1.6$ purely for convenience and choose $L = \pi$ (we could of course also choose $L = k\pi/2$, $k \in \Z$).

 Letting $\Psi^\bbC_A(x,\lambda)$ be the fundamental solution matrix to the $(2\times2)$ first order (complex) system associated with \eqref{eq:mathieu} such that $\Psi^\bbC_A(-\pi,\lambda)=I_2$, we have then that the propagator for all $s \in [0, \pi]$ is given by $M^\bbC_A(s,\lambda) = \Psi^\bbC_A(s,\lambda) \big(\Psi^\bbC_A(-s,\lambda)\big)^{-1}$. Now for a fixed value of $\theta\in[0,2\pi]$ we can look for $\theta$-eigenvalues and conjugate points of \eqref{eq:mathieu}. 
 A $\theta$-eigenvalue will occur when $M^\bbC_A(\pi,\lambda)$ has an eigenvalue $e^{i \theta}$, while a conjugate point will be a value of $s$ such that $M^\bbC_A(s,0)$ has an eigenvalue $e^{i \theta}$. That is, the following two quantities can be computed: 
\begin{align*}
 B_3&=\{ \textrm{The number of  } \lambda \in [0,\infty) \textrm{ such that } \det \left( M^\bbC_A(\pi, \lambda) - e^{i\theta}I_2 \right) = 0 \}, \\
B_4&= \{ \textrm{The number of  } s \in [0,\pi] \textrm{ such that } \det \left( M^\bbC_A(s, 0) - e^{i\theta}I_2 \right) = 0 \}.
\end{align*}
The number $B_4$ here corresponds to the choice $s_0=0$.
The graphs in Figures  \ref{fig:eigenvalues} and \ref{fig:conjugate} were computed using {\em Mathematica}'s numerical Mathieu equations, and plot the values of the $\theta$-eigenvalues and the conjugate points versus values of $\theta \in [0,2 \pi]$. 

\begin{figure}[h]
\includegraphics[scale=1]{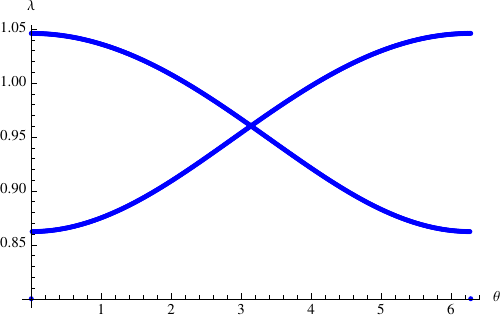}
\caption{A plot of the location of the $\theta$-eigenvalues versus $\theta$
in the Mathieu example (where $s=\pi$). It is clear that at $\theta = \pi$ there is a double eigenvalue.} \label{fig:eigenvalues}
\end{figure}

\begin{figure}[h]
\includegraphics[scale=1]{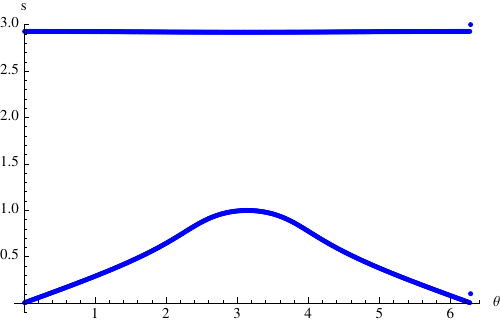}
\caption{A plot of the location of the conjugate points $s$ versus $\theta$ in the Mathieu example (where $\lambda=0$). } \label{fig:conjugate}
\end{figure}

The graphs show that in our numerical experiments the quantities $B_3$ and $B_4$ are equal for all $\theta\in[0,2\pi]$. 
It is worth noting that the multiplicity of the eigenvalue $\lambda$ when $\theta = \pi$ is two, however this is `canceled' out by two crossings along $\Gamma_4$ - i.e. we have two separate conjugate points, each with multiplicity one, and thus our calculations are in concert with Theorems \ref{th:mas1} and \ref{th:mas2}.

Theorem \ref{th:mas2} (ii), (v) tells us that for any $\theta\in(0,2\pi)$ the number of $\theta$-eigenvalues for $s=\pi$ will be the same as the (signed) count of the number of conjugate points for $\lambda = 0$, that is, that $B_3=A_3=-A_4$ as soon as we chose $\lambda_\infty>0$ large enough and $s_0>0$ small enough. We now need to choose a small $s_0>0$ as the arguments in Theorem \ref{th:mas2} (v) involve the curve $\Gamma_1$ as defined in Remark \ref{remN3.9}.

We recall that $B_3=A_3$ due to Lemma \ref{lem:cross} for any $\theta\in[0,2\pi]$. Also, for the chosen value 3.2 of the parameter in the Mathieu equation it is known that $\lambda=0$ is not a $\theta$-eigenvalue for any $\theta\in[0,2\pi]$, see \cite{BeOr78, JS99}.

Lemma \ref{lem:scross} can be applied for the crossings at the conjugate points $s_\ast\in(0,\pi)$ such that $3.2 \cos 2 s_\ast>0$. For any $\theta\in[0,2\pi]$ the latter inequality certainly holds for the upper conjugate point in Figure \ref{fig:conjugate}, and thus the crossing form is positive at this crossing by Lemma \ref{lem:scross}. At the lower conjugate point in Figure \ref{fig:conjugate} the inequality $3.2 \cos 2 s_\ast>0$ does not hold for $\theta$ close to $\pi$, and thus one can not use the sufficient condition \eqref{Vpd} for the crossing form to be positive formulated in Lemma \ref{lem:scross}. However, using the explicit formula for the crossing form in equation \eqref{lastlines}, we computed directly the value of the crossing form at each crossing.  As can be seen from Figure \ref{fig:potential}, the crossing form is positive also for the lower conjugate point.
This implies that $B_4=-A_4$ for the crossings in $(0,\pi)$, and confirms the equality $B_3=A_3=B_4=-A_4$ in yet another way and for all $\theta\in(0,2\pi)$.

\begin{figure}[h]
\includegraphics[scale=1]{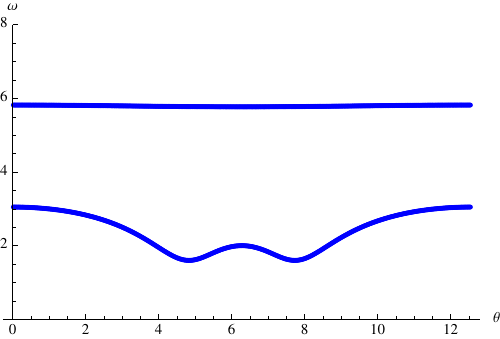}
\caption{A plot of the value $\omega=\omega(v(s_\ast(\theta)), \dot{v}(s_\ast(\theta)))$ of the crossing form versus $\theta$ at the two conjugate points $s_\ast=s_\ast(\theta)$. The crossing form was computed using the right-hand side of equation \eqref{lastlines}, and is positive.}\label{fig:potential}
\end{figure}


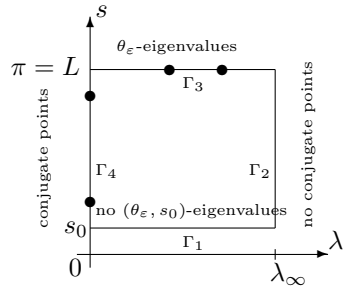
\begin{figure}
\begin{picture}(100,100)(-20,0)
\put(2,2){0}
\put(10,5){\vector(0,1){95}}
\put(5,10){\vector(1,0){95}}
\put(70,40){\text{\tiny $\Gamma_2$}}
\put(12,40){\text{\tiny $\Gamma_4$}}
\put(45,73){\text{\tiny $\Gamma_3$}}
\put(45,13){\text{\tiny $\Gamma_1$}}
\put(100,12){$\lambda$}
\put(12,100){$s$}
\put(80,20){\line(0,1){60}}
\put(80,8){\line(0,1){4}}
\put(78,0){$\lambda_\infty$}
\put(10,20){\line(1,0){70}}
\put(10,80){\line(1,0){70}}
\put(-20,78){$\pi= L$}
\put(0,18){$s_0$}
\put(10,30){\circle*{4}}
\put(10,70){\circle*{4}}
\put(40,80){\circle*{4}}
\put(60,80){\circle*{4}}
\put(20,87){{\tiny \text{$\theta_{\varepsilon}$-eigenvalues}}}
\put(12,25){{\tiny \text{no $(\theta_{\varepsilon},s_0)$-eigenvalues}}}
\put(-10,20){\rotatebox{90}{{\tiny conjugate points}}}
\put(90,20){\rotatebox{90}{{\tiny no conjugate points}}}
\end{picture}
\caption{ We let $\theta_\varepsilon \to 0$ and choose $s_0=s_0(\theta_\varepsilon)>0$ sufficiently small. Although $s_0^{\min}(\theta_\varepsilon) \to 0$,  for each $\theta_\varepsilon$ we still have that  $\lambda=0$ is  not a $\theta_{\varepsilon}$-eigenvalue, $\lambda=0$ is  not a $(\theta_{\varepsilon},s_0)$-eigenvalue, and conclusions of Theorems \ref{th:mas1} and \ref{th:mas2} still hold.}\label{F3}
\end{figure}

The case as $\theta \to 0$ or $\theta\to2\pi$ poses more of a problem. As these two possibilities are analogous,  we consider, as usual, the case $\theta \to 0$. In this case the hypothesis of Lemma  \ref{lem:llss} (ii) is not satisfied, so we can not expect to have a non-zero lower bound $\lim_{\theta \to 0^+} s_0^{\min}(\theta)$, where $s_0^{\min}(\theta)$ is defined as follows:
\begin{equation*}\begin{split}
s_0^{\min}(\theta):=\inf\big\{s\in(0,L]\big|\, &\text{ for some $\lambda>0$ on $[-s,s]$ there exists a nonzero solution}\\
&\qquad\text{  of the boundary value problem \eqref{eq:hill}, \eqref{sthetaBC} } \big\}.\end{split}
\end{equation*} Indeed, as seen from the plot, $\lim_{\theta \to 0^+} s_0^{\min}(\theta) = 0$ in the Mathieu example (see Figure \ref{fig:conjugate}).
One can choose, however, a sequence of nonzero $\theta_\varepsilon$ that converges to zero (see Figure \ref{F3}).
It is worth noting that the $\theta_\varepsilon$-eigenvalues stabilize away from zero, even though the lower bound $s_0^{\min}(\theta_\varepsilon)$ tends to zero. This is because we have chosen the parameter $q$ in the Mathieu equation $q = -1.6$ so that $0$ was not an eigenvalue for any $\theta$ (and in particular for $\theta = 0$). 
We could have similarly perturbed $\theta_\varepsilon$ away from zero in the negative direction, and we observe the same behavior. The numerical calculations can be  summarized as follows: For small $\theta_{\pm \varepsilon} $, the values of the $\theta$-eigenvalues are 0.862 and 1.046, each of multiplicity one, while the conjugate points are a small positive number and 2.926, each of multiplicity one.

 It is also worth noting that even though the entire boundary curve $\Gamma$ when $\theta=0$ has to be defined for $s_0=0$ as in Corollary \ref{cor:3.10} because \eqref{eq:const} is not defined at $s=0$, the curves $\Gamma_3$ and $\Gamma_4$ are regular. 
 
 Alternatively, if $\theta=0$, we can pick a small $s_0>0$ and define
 $\Gamma_1$ and $\Gamma_4$ as described in Remark \ref{remN3.9}. In this case, the curve $\Gamma_4$ contains only one conjugate point
 (the upper conjugate point $s_\ast$ on the vertical line $\theta=0$, see  Figure \ref{fig:conjugate}). Thus, $B_4=1$. Since $\cos s_\ast>0$ for the upper conjugate point, by Lemma \ref{lem:scross} we have $A_4=-B_4=-1$. Since $\cos 0>0$, we have $\mo(V(0))=1$, thus confirming the count in \eqref{cEVn} since $B_3=2$ is the  number of $\theta$-eigenvalues when $\theta=0$.

\bibliographystyle{amsalpha}

\end{document}